\newcounter{propcounter}
\newcommand{\labelinthm}[1]{%
   \label{temp#1}
   \protected@write \@auxout {}{\string \newlabel{#1}{{\emph{\ref{temp#1}}}{\thepage}{\emph{\ref{temp#1}}}{temp#1}{}} }%
}
\global\long\def\N{\mathbb{N}}
\global\long\def\P{\mathbb{P}}
\global\long\def\E{\mathbb{E}}
\global\long\def\eps{\varepsilon}
\newtheorem{lemma}{Lemma}[section]
\newtheorem{corollary}[lemma]{Corollary}
\newtheorem{theorem}[lemma]{Theorem}
\newtheorem{conjecture}[lemma]{Conjecture}
\theoremstyle{definition}
\newtheorem{claim}[lemma]{Claim}
\global\long\def\eps{\varepsilon}
\definecolor{darkgreen}{rgb}{0.1,0.7,0.1}
\title{Almost-full transversals in equi-$n$-squares}
\author{
Debsoumya Chakraborti\thanks{Mathematics Institute, University of Warwick, Coventry, United Kingdom.
\\ E-mail: {\tt
\{debsoumya.chakraborti, richard.montgomery, teodor.petrov\}@warwick.ac.uk}
\\ DC and RM supported by the European Research Council (ERC) under the European Union Horizon 2020 research and innovation programme (grant agreement No.\ 947978). TP  supported by the Warwick Mathematics Institute Centre for Doctoral Training, and by funding from the UK EPSRC (Grant number: EP/W523793/1).}
\and
Micha Christoph\thanks{Department of Computer Science, ETH Z\"{u}rich, Switzerland.
\\ E-mail: {\tt micha.christoph@inf.ethz.ch}
\\ Supported by the SNSF Ambizione Grant No. 216071.
}
\and 
Zach Hunter\thanks{Department of Mathematics, ETH Zürich, Switzerland.
\\ E-mail: {\tt zach.hunter@ifor.math.ethz.ch}}
\and
Richard Montgomery\footnotemark[1]
\and
Teo Petrov\footnotemark[1]
}
\begin{document}

\date{}

\maketitle

\begin{abstract} In 1975, Stein made a
wide generalisation of the Ryser-Brualdi-Stein conjecture on transversals in Latin squares, conjecturing that every equi-$n$-square (an $n\times n$ array filled with $n$ symbols where each symbol appears exactly $n$ times) has a transversal of size $n-1$. That is, it should have a collection of $n-1$ entries that share no row, column, or symbol.
In 2017, Aharoni, Berger, Kotlar, and Ziv showed that equi-$n$-squares always have a transversal with size at least $2n/3$.
 In 2019, Pokrovskiy and Sudakov disproved Stein's conjecture by constructing equi-$n$-squares without a transversal of size $n-\frac{\log n}{42}$, but asked whether Stein's conjecture is approximately true. I.e., does an equi-$n$-square always have a transversal with size $(1-o(1))n$?

We answer this question in the positive. More specifically, we improve both known bounds, showing that there exist equi-$n$-squares with no transversal of size $n-\Omega(\sqrt{n})$ and that every equi-$n$-square contains $n-n^{1-\Omega(1)}$ disjoint transversals of size $n-n^{1-\Omega(1)}$.
\end{abstract}

\section{Introduction}
A \emph{Latin square of order $n$} is an $n\times n$ array filled with $n$ symbols where each symbol appears exactly once in each row and each column. A \emph{transversal} (sometimes known as a \emph{partial transversal}) is a collection of cells in the array which share no row, column, or symbol, while a \emph{full transversal} in a Latin square of order $n$ is a transversal with $n$ cells. The \emph{size} of a transversal is its number of cells.
Latin squares, and transversals, have a long history of study dating back to Euler~\cite{OGeuler} who studied the decomposition of Latin squares into disjoint full transversals. As was known to Euler, a Latin square may have no full transversal. However, the prominent
Ryser-Brualdi-Stein conjecture~\cite{Ryser,Brualdi}, originating in the 1960's, suggests that every Latin square of order $n$ should have a transversal with $n-1$ cells and, moreover, one with $n$ cells if $n$ is odd. This conjecture has seen a lot of activity recently,  culminating in the proof by the fourth author~\cite{montgomery2023proof} that, when $n$ is sufficiently large, every Latin square of order $n$ has a transversal with $n-1$ cells. For more on this, related results, and the history of the study of Latin squares, see the recent surveys~\cite{Alexeysurvey,transversalsurvey}.

In 1975, Stein~\cite{stein1975transversals} made a series of bold conjectures that, broadly, suggest the Latin square conditions conjectured to guarantee a transversal of order $n-1$ may be overkill. In particular, he conjectured that any equi-$n$-square has a transversal of size $n-1$, where an equi-$n$-square is an $n\times n$ array filled with $n$ symbols which each appear exactly $n$ times. Thus, a Latin square of order $n$ is an equi-$n$-square where we additionally require every symbol to appear at most once in each row or column.
 As some evidence towards his conjecture, Stein~\cite{stein1975transversals} used the probabilistic method to show that any equi-$n$-square contains a transversal of size at least $(1-e^{-1})n$.
This bound was the state of the art for some 40 years, until Aharoni, Berger, Kotlar, and Ziv~\cite{aharoni2017conjecture} used topological methods to show that any equi-$n$-square contains a transversal of size at least $2n/3$. 
Very recently, Anastos and Morris~\cite{anastos2024notefindinglargetransversals} showed that any  equi-$n$-square contains a transversal of size at least $(3/4-o(1))n$. 
However, in 2019, Pokrovskiy and Sudakov~\cite{pokrovskiy2019counterexample} had shown that Stein's conjecture is, indeed, over-ambitious. That is, they constructed equi-$n$-squares that have no transversal with size larger than $n-\frac{1}{42}\log n$.

While this settles the falsity of Stein's conjecture, the bounds $n-O(\log n)$ and $(3/4-o(1))n$ on the size of the largest transversal that can be guaranteed in any equi-$n$-square are rather far apart. In particular, Pokrovskiy and Sudakov~\cite{pokrovskiy2019counterexample} (see also~\cite[Problem~4.3]{Alexeysurvey}) asked whether Stein's conjecture holds asymptotically, i.e., does an equi-$n$-square always have a transversal with $(1-o(1))n$ cells? Our main result is to confirm that it not only does, but that it can moreover be almost decomposed into such transversals, with the following result.

\begin{theorem}\label{thm:lowerbounddecomp}
There exists $\eps>0$ such that every equi-$n$-square contains at least $n-n^{1-\eps}$ disjoint transversals with size at least $n-n^{1-\eps}$.
\end{theorem}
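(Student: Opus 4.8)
The plan is to reframe the statement as a packing problem and attack it with the absorption method run on top of a semi-random (R\"odl nibble) argument, treating the symbols that appear very often in a single line separately from those that are well spread. Identifying rows and columns with the two sides of $K_{n,n}$, an equi-$n$-square is an edge-colouring of $K_{n,n}$ by $n$ colours, each used exactly $n$ times; a transversal is a rainbow matching; and we are asked for $n-n^{1-\eps}$ edge-disjoint rainbow matchings, each of size $n-n^{1-\eps}$ (together covering all but $O(n^{2-\eps})$ cells). Equivalently, on the $3$-partite $3$-uniform ``constraint'' hypergraph $H$ with vertex classes rows, columns and symbols and one edge $\{r,c,s\}$ per cell, we want a near-perfect packing of near-perfect matchings. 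Note $H$ is $n$-regular, and the row--column codegree is always exactly $1$; but the row--symbol and column--symbol codegrees equal the multiplicity of a symbol in a line and can be as large as $n$. Since the matchings must all be large, I would build them essentially in parallel --- via a single randomised process producing all of them at once --- rather than peeling them off one at a time, which would make the later matchings shrink.

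The first, and I expect main, obstacle is exactly these \emph{clustered} symbols (those appearing more than $n^{1-\delta}$ times in some row or column), which create the large codegrees that the nibble and absorption machinery cannot tolerate. A simple counting shows one cannot bound their \emph{number}: every symbol could be clustered. So they cannot be set aside as a bounded exceptional set; the argument has to handle them head-on. The key compensating fact is that clustered symbols are \emph{locally sparse}: since a line has only $n$ cells, at most $O(n^{\delta})$ symbols are heavy in any given line, so the heavy (line, symbol) incidences are well distributed.

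The heart of the proof is therefore a structural split of the symbols into \emph{spread} and \emph{clustered} ones, together with a way to build the $n-n^{1-\eps}$ matchings so that, within each matching, the clustered symbols are handled first and the spread symbols afterwards. On the spread symbols the relevant part of $H$ has all codegrees $\le n^{1-\delta}$ and is nearly regular, so a semi-random packing argument applies: running random greedy choices in parallel for all matchings and tracking the usual quasirandom quantities with concentration inequalities, one packs almost-perfectly many rainbow matchings using (almost) only spread symbols, with polynomial error. For the clustered symbols one processes them ``early'' in each matching, reserving for each clustered symbol a cell in one of its few heavy lines before that line is exhausted, using the local sparsity to avoid conflicts; the difficulty is doing this \emph{consistently across all $n-n^{1-\eps}$ matchings simultaneously}, so that the reserved cells, the nibble outputs, and (below) the absorbers never collide. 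Threading the clustered-symbol handling through the random process while keeping the clean pseudorandomness that the nibble relies on is where I expect the real work to lie.

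Finally, to make the matchings \emph{exactly} of size $\ge n-n^{1-\eps}$ rather than merely $(1-o(1))n$, and to complete the packing, I would reserve before the nibble a small random family of robust ``absorbing'' gadgets: partial rainbow matchings whose cells can be locally rearranged to additionally cover any sufficiently small, suitably balanced leftover set of rows, columns and symbols. Constructing such absorbers is standard in the Latin-square setting, but here it must be carried out respecting the clustered structure and reserved cells. After the nibble leaves only $n^{1-\Omega(1)}$ uncovered lines per matching and $n^{2-\Omega(1)}$ uncovered cells in total, one cleans up with a final greedy matching together with the absorbers, and then checks that each matching has reached size $\ge n-n^{1-\eps}$ and that at least $n-n^{1-\eps}$ of them were produced. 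The coexistence of the three mechanisms --- clustered handling, nibble, and absorption --- on the same random process, with all error terms kept polynomial in $n$, is the principal technical hurdle.
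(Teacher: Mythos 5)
Your reduction to packing rainbow matchings and your identification of the clustered symbols (the large row--symbol and column--symbol codegrees) as the central obstruction both match the paper's framing; the paper likewise groups, for each heavy pair or vertex, blocks of edges through it and treats the well-spread remainder with nibble-type machinery (packaged as the Molloy--Reed chromatic index theorem applied after the codegrees have been tamed, rather than a bespoke parallel nibble). But the step you defer --- ``reserving for each clustered symbol a cell in one of its few heavy lines \ldots\ consistently across all matchings simultaneously,'' which you yourself flag as ``where the real work lies'' --- is precisely the paper's one genuinely new idea, and your sketch gives no mechanism for it. The assignment of clustered symbols to lines is a global matching problem in an auxiliary bipartite multigraph $K$ (lines/vertices versus blocks), not something a greedy per-matching reservation can accomplish: the paper decomposes $K$ into $k$ perfect matchings via Hall's theorem and then merges them pairwise over $\log k$ random rounds, and the crucial technical point is that the naive random merge (flipping the parity of each alternating cycle of $M_i\cup M_j$) has unbounded dependence, so one must first delete a few edges to cut every component into paths/cycles of length at most $s$ before McDiarmid's inequality yields concentration of the row degrees. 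Without this bounded-dependence matching device (or an equivalent), you cannot show that the reserved cells are equidistributed over the rows, and the construction collapses. So the proposal is a correct problem analysis with the key lemma missing.

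Separately, the absorption layer you add is both unnecessary and likely unworkable here. It is unnecessary because $n-n^{1-\eps}$ is exactly the accuracy that polynomial-error semi-random arguments already deliver: the paper never absorbs, but instead observes that if an almost-proper colouring of all but $n^{2-\xi}$ of the $n^2$ cells into $(1+n^{-\xi})n$ matchings failed to contain $n-n^{1-\eps}$ classes of size at least $n-n^{1-\eps}$, a two-line edge count would give strictly fewer than $n^2$ cells, a contradiction. It is likely unworkable because absorbers rely on local flexibility, and a symbol all of whose occurrences lie in a single column has none: any rearrangement covering that symbol must pass through that column, so the absorbers would compete for exactly the scarce resource that the clustered symbols already exhaust.
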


We also modify Pokrovskiy and Sudakov's construction from \cite{pokrovskiy2019counterexample} to show that a related but simpler construction provides equi-$n$-squares that must omit far more symbols in any transversal, as follows.

\begin{theorem}\label{thm:upperbound}
For each $n\in \mathbb{N}$, there is an equi-$n$-square with no transversal of size $n-\left(\frac{1}{2\sqrt{2}}+o(1)\right)\sqrt{n}$.
\end{theorem}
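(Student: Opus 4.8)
The plan is to produce an explicit equi-$n$-square by adapting and streamlining the block construction of Pokrovskiy and Sudakov~\cite{pokrovskiy2019counterexample}, arranging matters so that the obstruction to large transversals becomes quantitatively much stronger — forcing roughly $\sqrt n$ missed symbols instead of $\log n$ — while the construction itself becomes more transparent. I would first build the desired array when $n$ has a convenient shape (so that all the size parameters below are integers), and then deduce the general statement by a short padding argument.

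\emph{The construction.} Partition the rows, the columns, and the symbols of an $n\times n$ array into $m=\Theta(\sqrt n)$ groups each, with carefully chosen (and in general unequal) sizes, say $R_1,\dots,R_m$, $C_1,\dots,C_m$ and $\Sigma_1,\dots,\Sigma_m$. Fill each block $R_i\times C_j$ using symbols from a single symbol group only, assigned according to a ``staircase'' rule, so that a distinguished set $\Sigma^\star$ of $\Theta(\sqrt n)$ symbols recurs along the diagonal band of blocks. Two checks are then required. First, the array must be a genuine equi-$n$-square, i.e.\ each symbol occurs exactly $n$ times; this is a finite sum over the blocks meeting a given symbol's group, and it is precisely this requirement that pins down the (unequal) group sizes. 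Second, one records the feature to be exploited: each symbol of $\Sigma^\star$, although it occurs the full $n$ times and is spread throughout the array, can be ``collected'' by a transversal only through cells lying in row- or column-groups of sufficiently high index, so that collecting many symbols of $\Sigma^\star$ is expensive in rows and columns.

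\emph{The bound on transversals.} Given a transversal $T$, I would set up, for each level $t\le m$, a local inequality relating (a) the number of cells of $T$ lying in the first $t$ staircase bands of blocks — which is at most the combined size of the first $t$ row-groups, and likewise of the first $t$ column-groups, since $T$ uses each row and each column at most once — to (b) the number of symbols of $\Sigma^\star$ that $T$ can have collected by level $t$, each of which ``charges'' to a $T$-cell of index beyond its own. Summing these inequalities over $t$ and optimising over the ways $T$ can spread its cells across the $m$ levels shows that $T$ must omit a constant proportion of $\Sigma^\star$, hence at least $(\tfrac1{2\sqrt2}-o(1))\sqrt n$ symbols in total; the exact constant $\tfrac1{2\sqrt2}$ comes out of the extremal distribution in this optimisation, which is the reason the group sizes have to be chosen the way they are. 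Identifying the right ``potential'' that makes this summation tight, and extracting the sharp constant, is the delicate part of the argument; by contrast the construction and the equi-square verification are essentially bookkeeping.

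\emph{General $n$.} Given an arbitrary $n\in\mathbb{N}$, let $n'$ be the largest integer of the convenient shape with $n-n'=O(\sqrt n)$. Build the bad equi-$n'$-square above and extend it to an equi-$n$-square by adjoining $n-n'$ new rows, columns and symbols and filling the new strips so that every symbol — old or new — ends up with exactly $n$ occurrences, which is routine. Any transversal of the enlarged array restricts to a transversal of the equi-$n'$-square and picks up at most $n-n'$ further cells, so it still omits at least $(\tfrac1{2\sqrt2}-o(1))\sqrt{n'}=(\tfrac1{2\sqrt2}-o(1))\sqrt n$ symbols, as required.
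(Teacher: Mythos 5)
There are two genuine gaps here. The first and most serious is that the heart of the argument is never supplied: you describe a ``staircase'' block construction with unspecified unequal group sizes, an unspecified assignment rule, and a summation/potential argument whose key step you explicitly defer (``identifying the right potential\dots is the delicate part''). Nothing in the sketch shows that a transversal must omit a constant fraction of $\Sigma^\star$, nor where the constant $\tfrac1{2\sqrt2}$ comes from. The charging-by-levels mechanism you describe is essentially the Pokrovskiy--Sudakov nested argument, which as stated only yields $\Omega(\log n)$ omissions; upgrading it to $\Omega(\sqrt n)$ is exactly the content of the theorem and cannot be left as bookkeeping. For comparison, the paper's mechanism is different and fully local: for $n=2m^2$ it tiles the square with $m\times m$ boxes, gives each \emph{pair} of boxes $\{A_{i,j},A_{j,i}\}$ (and each diagonal pair $\{A_{2k-1,2k-1},A_{2k,2k}\}$) its own colour, and observes that if a transversal uses the diagonal colour of the cross $C_j$ (the union of the $j$-th row-band and column-band of boxes), then it has at most $2m-1$ cells in $C_j$ but $C_j$ carries $2m-1$ other colours appearing nowhere else, so one colour of $C_j$ is omitted; since each colour lies in at most two crosses, at least $m/2$ colours are omitted. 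No optimisation over distributions is needed.

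The second gap is quantitative and would sink the reduction to general $n$ even if the core construction were in hand. A transversal of the enlarged array can have up to $2(n-n')$ cells outside the old $n'\times n'$ subarray (up to $n-n'$ in new rows plus up to $n-n'$ in new columns), not $n-n'$ as you claim. With $n'$ the largest integer of the form $2m^2$, the deficit $n-n'$ can be as large as roughly $2\sqrt2\,\sqrt n$, so the resulting bound $|T|\le n'-\tfrac{1}{2\sqrt2}\sqrt{n'}+2(n-n')=n+(n-n')-\tfrac{1}{2\sqrt2}\sqrt{n'}$ exceeds $n$ and is vacuous; even your factor-of-one count only barely breaks even. Any padding by $\Theta(\sqrt n)$ rows and columns destroys a $\Theta(\sqrt n)$ gain. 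The paper avoids this by not padding at that scale: it replaces the square boxes by $a\times b$ rectangles with $a=m+r$, $b=m-r$ chosen so that $n-2ab=O(n^{1/4})$, leaving only $O(n^{1/4})$ leftover rows and columns, whose contribution to a transversal is $o(\sqrt n)$. You would need an analogous second-order adjustment of your group sizes (or some other device making the leftover $o(\sqrt n)$) for the general-$n$ step to go through.
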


As our construction is very natural, it is reasonable to suggest that the constant $\frac{1}{2\sqrt{2}}$ in Theorem~\ref{thm:upperbound} is unimprovable. With slightly less ambition, we suggest the following  to replace Stein's disproved conjecture.

\begin{conjecture}
$\exists$ $C>0$ such that every equi-$n$-square has a transversal with size at least $n-C\sqrt{n}$.
\end{conjecture}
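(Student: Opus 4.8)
The plan is to prove the conjecture by iteratively improving the transversal supplied by Theorem~\ref{thm:lowerbounddecomp}, pushing the number of missed symbols down from $n^{1-\eps}$ to $O(\sqrt n)$. It is convenient to view an equi-$n$-square $A$ as an edge-colouring of the complete bipartite graph between rows and columns in which every colour class has exactly $n$ edges; a transversal of size $k$ is then a rainbow matching of size $k$. Note that the fractional relaxation always has value exactly $n$: putting weight $1/n$ on every cell makes all $3n$ row, column and symbol constraints tight, and no rainbow matching can have size more than $n$. So the conjecture asserts that the additive integrality gap of this $n^{2}$-variable, $3n$-constraint packing LP is $O(\sqrt n)$; since Theorem~\ref{thm:upperbound} shows the gap is $\Omega(\sqrt n)$, proving it would determine the gap up to a constant factor.

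First I would fix a transversal $T$ with $|T|=n-d$, where $d:=n^{1-\eps}$, from Theorem~\ref{thm:lowerbounddecomp}, and let $R_0,C_0,S_0$ be the sets of rows, columns and symbols it misses, each of size $d$. The augmentation move increases $|T|$ by one by finding a bounded-length \emph{rainbow alternating structure}: an uncovered row $r$, column $c$ and symbol $s$ together with either a cell $(r,c)$ coloured $s$ — added directly — or a short alternating sequence that introduces a new cell, removes a cell of $T$, re-places the displaced symbol at a free line (i.e.\ row or column), and so on, until the remaining two of $r,c,s$ are absorbed; each such move strictly decreases $d$. The heart of the proof is to show that while $d\gg\sqrt n$ such a structure of length at most, say, four always exists. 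The reason for the $\sqrt n$ threshold is the following heuristic, which also explains why I would conjecture only $C\sqrt n$ and not $o(\sqrt n)$: fixing an uncovered row $r$ and symbol $s$, the colour class of $s$ has $n$ cells, of which a $1-o(1)$ fraction sit at lines covered by $T$, giving $\asymp n$ candidate first moves; closing the alternating structure at an uncovered column then requires roughly two independent coincidences each of chance $\asymp d/n$ (landing on one of the $d$ missing symbols or columns), so a good structure appears once $n\cdot(d/n)^{2}\gtrsim 1$, i.e.\ $d\gtrsim\sqrt n$, and the argument stalls below that scale.

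The hard part will be turning this heuristic into a deterministic extremal statement, because the naive ``confinement'' certificates are useless here: if $M$ symbols have all their cells on a set of $p$ rows and $q$ columns then $Mn\le (p+q)n$, so $M\le p+q$, and such a configuration forces a transversal deficiency of only $M-(p+q)\le 0$. Thus any genuine obstruction must exploit the row, column and symbol constraints \emph{simultaneously} — a truly three-dimensional phenomenon. Concretely, I would set up an auxiliary flow network whose nodes are the cells, the lines, and the missing symbols $S_0$, encoding all bounded alternating structures, and prove that whenever augmentation is impossible its minimum cut has size $O(\sqrt n)$; by the max-flow--min-cut theorem this then bounds $d$. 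Establishing this cut bound is where I expect to need a stability analysis: one would show that any equi-$n$-square for which $d\gg\sqrt n$ is forced must be structurally close to the extremal construction behind Theorem~\ref{thm:upperbound}, and then verify by direct inspection that that construction, and its $O(\sqrt n)$-sized perturbations, nonetheless contains a transversal of size $n-O(\sqrt n)$. Feeding the stability case back in would complete the proof; making the ``close to extremal'' dichotomy quantitative, and robust under the perturbations one must tolerate, is the principal technical obstacle.
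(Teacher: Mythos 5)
The statement you are addressing is not a theorem of the paper at all: it is stated as an open conjecture (offered as a replacement for Stein's disproved conjecture), and the paper contains no proof of it. Your text is likewise not a proof but a research plan, and its central steps are exactly the open content of the conjecture. Concretely: (i) the claim that, as long as $d\gg\sqrt n$, there is always a rainbow alternating structure of bounded length (you suggest length at most four) is asserted only via a probabilistic heuristic that treats the two ``coincidences'' as independent events of probability about $d/n$; nothing in the structure of an arbitrary equi-$n$-square justifies this independence, and deterministic obstructions to short augmenting structures are precisely what one must rule out. Your own remark that the naive confinement certificates give nothing shows that you do not yet have any candidate for what a genuine obstruction looks like, which is the whole difficulty. (ii) The flow-network step is not a proof mechanism as stated: you would need to define the network so that augmenting structures correspond to augmenting paths and then \emph{prove} that non-augmentability forces a cut of size $O(\sqrt n)$, and moreover that a small cut forces $d=O(\sqrt n)$; both implications are simply announced. (iii) The stability step (``any square forcing $d\gg\sqrt n$ is close to the construction of Theorem~\ref{thm:upperbound}'') is speculative and, even if true, would require a quantitative stability theorem of a kind that is nowhere sketched. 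So there is a genuine gap --- indeed the entire argument is the gap.

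Two smaller points. First, starting from Theorem~\ref{thm:lowerbounddecomp} buys you little: an iterative augmentation argument that works whenever $d\gg\sqrt n$ would not need the $n-n^{1-\eps}$ starting point, since one could start from the classical $(1-e^{-1})n$ bound or the $2n/3$ bound and augment; the difficulty is entirely in the regime $\sqrt n\ll d\ll n$, not in where you start. Second, be careful with the claim that a length-four (or any bounded-length) structure suffices down to $d=O(\sqrt n)$: the construction in Section~\ref{sec:upperbound} shows that at $d\approx\sqrt n/(2\sqrt 2)$ no augmentation of any length is possible, but it does not tell you that bounded-length augmentations exist above that scale, and in related rainbow-matching problems local switching arguments are known to stall well before the conjectured truth, which is why the paper's actual methods (semi-random nibble plus the bounded-dependence matching algorithm, giving $n-n^{1-\Omega(1)}$) avoid augmentation altogether.
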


As is well-known, a Latin square $L$ of order $n$ has a natural representation as an $n$-regular linear 3-partite 3-uniform $(3n)$-vertex hypergraph $\mathcal{H}(L)$, where 3 vertex classes of size $n$ represent respectively the rows, columns, and symbols of $L$ and a 3-uniform edge of the row, column, and symbol is added to $\mathcal{H}(L)$ for each cell in $L$. As $L$ is an equi-$n$-square, $\mathcal{H}(L)$ is $n$-regular, and as $L$ is a Latin square each codegree in $\mathcal{H}(L)$ is at most 1 (i.e., $\mathcal{H}(L)$ is linear). From this perspective, the main question we have considered asks: in an $n$-regular 3-partite 3-uniform $(3n)$-vertex hypergraph $\mathcal{H}$ with vertex classes $A,B$ and $C$ of size $n$, if the codegree of $x$ and $y$ is at most 1 for each $x\in A$ and $y\in B$, then does $\mathcal{H}$ have a transversal of size $(1-o(1))n$ (or, indeed, an almost decomposition into such transversals)? Thus, for Theorem~\ref{thm:lowerbounddecomp}, we dispense with $2/3$rds of the codegree conditions for the Latin square case. Note that we cannot dispense with all of the codegree conditions here, due to the following construction adapted from an example of Alon and Kim~\cite{alon1997degree}.

Let $t\in \N$, $X=\{x_1,\ldots,x_{2t},x_1',\ldots,x_t'\}$, $Y=\{y_1,\ldots,y_{2t},y_1',\ldots,y_t'\}$ and $Z=\{z_1,\ldots,z_{2t},z_1',\ldots,z_t'\}$. Let $V(\mathcal{H})=X\cup Y\cup Z$ and
\[
E(\mathcal{H})=\{(x_i,y_i,z_j'),(x_i,y'_j,z_i),(x'_j,y_i,z_i):i\in [2t],j\in [t]\}.
\]
Note that any matching in $\mathcal{H}$ can contain at most 1 edge intersecting $\{x_i,y_i,z_i\}$ for each $i\in [2t]$, and therefore any matching in $\mathcal{H}$ can have at most $2t$ edges, and hence cover at most $6t=2|\mathcal{H}|/3$ vertices. Moreover, $\mathcal{H}$ is $2t$-regular and 3-partite. As $\mathcal{H}$ has $n:=3t$ vertices in each class, this does not quite correspond to our case, but blowing up each vertex in $\mathcal{H}$ by 3 gives an $18t$-regular 3-partite 3-graph in which any matching must omit at least $1/9$ of the vertices and in which there are $3n=9t$ vertices in each class. Then, taking 2 disjoint copies of this graph gives an $18t$-regular 3-partite 3-graph with $n'=3\cdot 18t$ vertices in which any perfect matching must omit at least $n'/9$ vertices.

Later, in Section~\ref{sec:deduce}, we will deduce Theorem~\ref{thm:lowerbounddecomp} from a more general result, Theorem~\ref{thm:semiunifieddecomp}. Similarly, we could deduce a version of Theorem~\ref{thm:lowerbounddecomp} with further weakened codegree conditions to say that (using the notation above) $\mathrm{cod}_{\mathcal{H}}(x,y)\leq n^{1-\mu}$, as long as $\eps\ll \mu$. Here, the corresponding square has $n$ copies of each symbol, and $n$ symbols in each row and column, but we allow up to $n^{1-\mu}$ symbols to appear in any one square as long as they are all different.

In Section~\ref{sec:upperbound}, we prove Theorem~\ref{thm:upperbound}. In Section~\ref{sec:simpler}, we sketch a simplified version of our methods for Theorem~\ref{thm:lowerbounddecomp}, showing how to find a large transversal in certain equi-$n$-squares. In Section~\ref{sec:lowerbound}, we then prove Theorem~\ref{thm:lowerbounddecomp}.

\section{Equi-{\boldmath $n$}-squares with no partial transversal of size {\boldmath $n-\Omega(\sqrt{n})$}}\label{sec:upperbound}
To illustrate the main ideas of our construction, we first prove \Cref{thm:upperbound} whenever $n=2m^2$ for some integer $m$, constructing equi-$n$-squares with no transversal of size larger than $n-\frac{1}{2\sqrt{2}}\sqrt{n}$ as depicted in Figure~\ref{fig:simplifiedcase}a). We will then show how this construction can be modified for general $n$ without changing the size of the largest possible transversal by much. In our construction we use a key part of the construction by Pokrovskiy and Sudakov~\cite{pokrovskiy2019counterexample} of equi-$n$-squares with no transversal of size more than $n-\frac{1}{42}\log n$, which, roughly, is the argument depicted in Figure~\ref{fig:simplifiedcase}b).

\begin{proof}[\bf{\boldmath{Proof of \Cref{thm:upperbound} whenever $n = 2m^2$ for some integer $m$}}]
We construct an equi-$n$-square $S$ with both rows and columns indexed by elements in $[n]$, where $n=2m^2$ for some $m\in \mathbb{Z}$. We use the notation $(x,y)$ to denote the cell in the $x$-th row and $y$-th column. We partition $S$ into $(2m)^2$ \textit{boxes}, each containing $m^2$ cells, as follows. For every $i,j \in [2m]$, let
\[
A_{i,j}=\{ (x,y) : 1+(i-1)m \leq x \leq im ~~ \text{and} ~~1+(j-1)m \leq y \leq jm \}.
\]
We now pair up the boxes and assign a unique colour to each pair, noting that this uses $\frac{(2m)^2}{2}=2m^2=n$ colours, as depicted in Figure~\ref{fig:simplifiedcase}a). For each $k\in [m]$, we pair the boxes $A_{2k-1,2k-1}$ and $A_{2k,2k}$ and colour every cell they contain the same colour. For each $1\leq i<j\leq 2m$, we pair the boxes $A_{i,j}$ and $A_{j,i}$ and colour every cell they contain the same colour.
Notice that as each colour appears in exactly $2m^2=n$ many cells, we have constructed an equi-$n$-square.

\begin{figure}[t]
\begin{center}
\begin{tikzpicture}[define rgb/.code={\definecolor{mycolor}{rgb}{#1}}, rgb color/.style={define rgb={#1},mycolor},scale=0.9]

\def\wi{0.75cm}
\def\n{5}

\foreach \x in {0,1,...,\n,6}
\foreach \y in {0,1,...,\n,6}
{
\coordinate (A\x\y) at ($\n*(0,\wi)+\x*(\wi,0)-\y*(0,\wi)$);
}

\foreach \x/\y/\col in {0/0/blue,1/1/blue,2/2/blue!50,3/3/blue!50,4/4/violet!80,5/5/violet!80}
{
\draw (A\x\y) [fill=\col,\col] rectangle ($(A\x\y)+(\wi,\wi)$);
}

\foreach \x/\y/\col in {0/1/yellow,0/2/darkgreen!50,0/3/teal,2/1/orange,0/4/magenta,0/5/blue!30,3/5/purple,2/5/red!60,1/5/green,4/5/pink,3/4/lime,2/4/brown,1/4/gray,1/3/red,2/3/black!30}
{
\draw (A\x\y) [thin,\col,fill=\col] rectangle ($(A\x\y)+(\wi,\wi)$);
\draw (A\y\x) [thin,\col,fill=\col] rectangle ($(A\y\x)+(\wi,\wi)$);
}
\draw ($(A00)-(0.75*\wi,0)+0.5*(0,\wi)$) node {\textbf{a)}};

\foreach \x in {0,1,...,\n}
{
\foreach \z in {0,1,...,2}
{
\draw [black!60] ($(A\x\n)+0.33333333*(\z*\wi,0)$) --  ($(A\x\n)+0.33333333*(\z*\wi,0)+\n*(0,\wi)+(0,\wi)$);
\draw [black!60] ($(A0\x)+0.33333333*(0,\z*\wi)$) --  ($(A0\x)+0.33333333*(0,\z*\wi)+\n*(\wi,0)+(\wi,0)$);
}
}

\foreach \x in {0,1,...,\n}
{
{
\draw [black] ($(A\x\n)$) --  ($(A\x\n)+\n*(0,\wi)+(0,\wi)$);
\draw [black] ($(A0\x)$) --  ($(A0\x)+\n*(\wi,0)+(\wi,0)$);
}
}
\draw [black] ($(A55)+(\wi,0)$) --  ($(A55)+\n*(0,\wi)+(\wi,\wi)$);
\draw [black] ($(A00)+(0,\wi)$) --  ($(A00)+\n*(\wi,0)+(\wi,\wi)$);
\end{tikzpicture}
\;\;\;\;\;\hspace{2cm}
\begin{tikzpicture}[define rgb/.code={\definecolor{mycolor}{rgb}{#1}}, rgb color/.style={define rgb={#1},mycolor},scale=0.9]
\def\wi{0.75cm}
\def\n{5}

\foreach \x in {0,1,...,\n,6}
\foreach \y in {0,1,...,\n,6}
{
\coordinate (A\x\y) at ($\n*(0,\wi)+\x*(\wi,0)-\y*(0,\wi)$);
}

\foreach \x/\y/\col in {2/2/blue!50}
{
\draw (A\x\y) [fill=\col,\col] rectangle ($(A\x\y)+(\wi,\wi)$);
}

\foreach \x/\y/\col in {0/2/darkgreen!50,2/1/orange,2/5/red!60,2/4/brown,2/3/black!30}
{
\draw (A\x\y) [thin,\col,fill=\col] rectangle ($(A\x\y)+(\wi,\wi)$);
\draw (A\y\x) [thin,\col,fill=\col] rectangle ($(A\y\x)+(\wi,\wi)$);
}

\draw ($(A00)-(0.75*\wi,0)+0.5*(0,\wi)$) node {\textbf{b)}};

\foreach \x in {0,1,...,\n}
{
\foreach \z in {0,1,...,2}
{
\draw [black!60] ($(A\x\n)+0.333333333333333333333*(\z*\wi,0)$) --  ($(A\x\n)+0.333333333333333333333*(\z*\wi,0)+\n*(0,\wi)+(0,\wi)$);
\draw [black!60] ($(A0\x)+0.333333333333333333333*(0,\z*\wi)$) --  ($(A0\x)+0.333333333333333333333*(0,\z*\wi)+\n*(\wi,0)+(\wi,0)$);
}
}

\foreach \x in {0,1,...,\n}
{
\draw [black] ($(A\x\n)$) --  ($(A\x\n)+\n*(0,\wi)+(0,\wi)$);
\draw [black] ($(A0\x)$) --  ($(A0\x)+\n*(\wi,0)+(\wi,0)$);
}
\draw [black] ($(A55)+(\wi,0)$) --  ($(A55)+\n*(0,\wi)+(\wi,\wi)$);
\draw [black] ($(A00)+(0,\wi)$) --  ($(A00)+\n*(\wi,0)+(\wi,\wi)$);
\foreach \x in {0,1,...,\n}
{
\foreach \z in {0,1,...,2}
{
\draw [black!60] ($(A\x\n)+0.333333333333333333333*(\z*\wi,0)$) --  ($(A\x\n)+0.333333333333333333333*(\z*\wi,0)+\n*(0,\wi)+(0,\wi)$);
\draw [black!60] ($(A0\x)+0.333333333333333333333*(0,\z*\wi)$) --  ($(A0\x)+0.333333333333333333333*(0,\z*\wi)+\n*(\wi,0)+(\wi,0)$);
}
}
\foreach \x in {0,2,3}
{
\draw [black] ($(A\x\n)$) --  ($(A\x\n)+\n*(0,\wi)+(0,\wi)$);
}
\foreach \x in {1,2,5}
{
\draw [black] ($(A0\x)$) --  ($(A0\x)+\n*(\wi,0)+(\wi,0)$);
}
\draw [black] ($(A55)+(\wi,0)$) --  ($(A55)+\n*(0,\wi)+(\wi,\wi)$);
\draw [black] ($(A00)+(0,\wi)$) --  ($(A00)+\n*(\wi,0)+(\wi,\wi)$);

\foreach \x/\y/\z/\zz in {2/2/2/2,2/3/0/2,2/0/1/1,1/2/0/1,5/2/1/3}
{
\draw [black,thick] ($(A\x\y)+0.333333333333333333333*(\z*\wi,\zz*\wi)$) rectangle ($(A\x\y)+0.333333333333333333333*(\z*\wi,\zz*\wi)+0.33333333*(\wi,-\wi)$);
\draw [black,thick] ($(A\x\y)+0.333333333333333333333*(\z*\wi,\zz*\wi)$) -- ++($0.333333333333333333333*(\wi,-\wi)$);
\draw [black,thick] ($(A\x\y)+0.333333333333333333333*(\z*\wi,\zz*\wi)+0.333333333333333333333*(\wi,0)$) -- ++($0.333333333333333333333*(-\wi,-\wi)$);
}

\foreach \x/\y in {2/4,2/3,2/0}
{
\draw ($(A\x\y)$) -- ++(\wi,0);
}
\foreach \x/\y in {4/2,5/2,1/2}
{
\draw ($(A\x\y)$) -- ++(0,\wi);
}

\end{tikzpicture}
\end{center}
\caption{\textbf{a)} Our construction in the simplified case when $n=2m^2$ with $m=3$. \textbf{b)} The boxes comprising $C_3$ in the same equi-$n$-square with a transversal highlighted -- if a light blue square appears in $C_3$ in a transversal then at most 4 of the 5 colours appearing only in $C_3$ can appear. Thus, at least one colour from $C_3\cup C_4$ is omitted in any transversal.\label{fig:simplifiedcase}}
\end{figure}

Letting $S$ be the equi-$n$-square we have constructed, we now show it has no transversal with more than $n-m/2$ cells. Suppose, then, that $T$ is a transversal of $S$. For each $k \in [2m]$, let
\[
C_k=\Big(\bigcup_{i\in [2m]} A_{i,k}\Big) \bigcup \Big(\bigcup_{j\in [2m]} A_{k,j}\Big),
\]
noting this is the disjoint union of $4m-1$ boxes. As depicted in Figure~\ref{fig:simplifiedcase}b), we now show the following claim.

\begin{claim}\label{clm:missing one colour}
    For every $k\in [m]$, at least one colour used in $C_{2k-1}\cup C_{2k}$ is not used in $T$.
\end{claim}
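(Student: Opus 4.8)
Fix $k\in[m]$ and write $\ell:=2k-1$, so that the claim concerns the colours occurring in $C_\ell\cup C_{\ell+1}$. First I would record which colours these are, together with the at most two boxes each of them occupies. Besides the diagonal colour $D$ of the box‑pair $\{A_{\ell,\ell},A_{\ell+1,\ell+1}\}$ and the colour $E$ of the pair $\{A_{\ell,\ell+1},A_{\ell+1,\ell}\}$, the colours appearing in $C_\ell\cup C_{\ell+1}$ are exactly the colours $L_j$ of the pairs $\{A_{\ell,j},A_{j,\ell}\}$ and the colours $M_j$ of the pairs $\{A_{\ell+1,j},A_{j,\ell+1}\}$, over all $j\in[2m]\setminus\{\ell,\ell+1\}$. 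This is immediate from the pairing rule, since a colour occurs in $C_\ell\cup C_{\ell+1}$ precisely when one of its two boxes $A_{i,j}$ has $i$ or $j$ in $\{\ell,\ell+1\}$; a careful enumeration also shows there are exactly $4m-2$ such colours, which is the count that makes the budgeting argument below tight.

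Now suppose for contradiction that $T$ uses every colour of $C_\ell\cup C_{\ell+1}$; in particular $T$ contains cells $t_D,t_E,t_{L_j},t_{M_j}$ of the respective colours, and each of these cells lies in one of the two boxes of its colour. I would then count the cells of $T$ inside block‑row $\ell$ (the $m$ rows covered by $A_{\ell,1},\dots,A_{\ell,2m}$) and inside block‑column $\ell$ (the $m$ columns covered by $A_{1,\ell},\dots,A_{2m,\ell}$). Every cell of $T$ in block‑row $\ell$ has colour $D$, $E$, or some $L_j$, and conversely such a colour contributes a cell to block‑row $\ell$ exactly when its cell sits in the box of that colour lying in block‑row $\ell$; since block‑row $\ell$ has only $m$ rows this gives
\[
[t_D\in A_{\ell,\ell}]+[t_E\in A_{\ell,\ell+1}]+\bigl|\{j:t_{L_j}\in A_{\ell,j}\}\bigr|\ \le\ m,
\]
and the analogous count for block‑column $\ell$ gives
\[
[t_D\in A_{\ell,\ell}]+[t_E\in A_{\ell+1,\ell}]+\bigl|\{j:t_{L_j}\in A_{j,\ell}\}\bigr|\ \le\ m.
\]
Adding these, and using that $t_E$ lies in exactly one of $A_{\ell,\ell+1},A_{\ell+1,\ell}$ and that for each $j$ the cell $t_{L_j}$ lies in exactly one of $A_{\ell,j},A_{j,\ell}$ (so the two "halves" sum to $1$ and to $2m-2$ respectively), I get $2\,[t_D\in A_{\ell,\ell}]+1+(2m-2)\le 2m$, hence $[t_D\in A_{\ell,\ell}]=0$, i.e.\ $t_D\in A_{\ell+1,\ell+1}$. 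Running the same count with the roles of $\ell$ and $\ell+1$ interchanged (block‑row/column $\ell+1$ and the colours $M_j$ in place of the $L_j$) yields $t_D\in A_{\ell,\ell}$, a contradiction. Therefore $T$ omits some colour of $C_\ell\cup C_{\ell+1}=C_{2k-1}\cup C_{2k}$.

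The one thing to spot — and the main obstacle in the sense that the naive count is perfectly balanced — is the role of the corner box $A_{\ell,\ell}$: it lies in both block‑row $\ell$ and block‑column $\ell$, whose combined budget is exactly $2m$ cells of $T$, while the colour $E$ and the $2m-2$ colours $L_j$ each contribute exactly $1$ to this combined count no matter where their cells sit, using up $2m-1$ of the budget; so a cell of colour $D$ placed in the corner $A_{\ell,\ell}$ (which costs $2$) overloads it by one, forcing $t_D$ into $A_{\ell+1,\ell+1}$, and symmetrically out of $A_{\ell+1,\ell+1}$. Since $D$ occupies no other box, there is no room left, which is the contradiction. Everything else — the identification of the colours of $C_\ell\cup C_{\ell+1}$ and the two inequalities — is routine bookkeeping.
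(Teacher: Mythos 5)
Your proof is correct and uses essentially the same double-counting as the paper: a block-row and a block-column of $C_j$ each contain at most $m$ cells of $T$, the corner box $A_{j,j}$ is counted in both, and the $2m-1$ non-diagonal colours of $C_j$ live entirely inside $C_j$. The only cosmetic difference is that the paper localises to the single $C_j$ containing the cell of colour $D$ and finds a missed colour there, whereas you run the count for both $j\in\{2k-1,2k\}$ to show $t_D$ can lie in neither diagonal box.
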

\begin{proof}[Proof of Claim~\ref{clm:missing one colour}] Let $k\in [m]$, and suppose that the colour in boxes $A_{2k-1,2k-1}$ and $A_{2k,2k}$ is blue.
If $T$ misses out on the colour blue, we would be done. Suppose then that $T$ contains a blue cell in $A_{j,j}$ for $j\in \{2k-1,2k\}$.
As $\bigcup_{i\in [2m]} A_{i,j}\subset C_j$ contains elements only in $m$ columns of $S$, $T$ has at most $m$ cells in $\bigcup_{i\in [2m]} A_{i,j}$. Similarly, $T$ has at most $m$ cells in $\bigcup_{i\in [2m]} A_{j,i}\subset C_j$.
As $T$ has a cell in $A_{j,j}$, it therefore has at most $2m-1$ cells in $C_j$, and thus at most $2m-2$ cells in $C_j$ which are not blue. However, $C_j$ uses $2m-1$ non-blue colours, each of which do not appear in $S \setminus C_j$. Thus, $T$ misses out on at least one colour used on $C_j\subset C_{2k-1}\cup C_{2k}$.
\renewcommand{\qedsymbol}{$\boxdot$}
\end{proof}
\renewcommand{\qedsymbol}{$\square$}
\noindent To finish, observe that each colour appears on at most two of $C_{2k-1}\cup C_{2k}$, $k \in [m]$. This combined with \Cref{clm:missing one colour} implies that $T$ must miss out on at least $\frac{m}{2}$ colours, and therefore $|T| \leq n-\frac{m}{2}=n -\frac{\sqrt{n}}{2\sqrt{2}}$. 
\end{proof}

We now deal with the general $n$ case.
Above we considered boxes of equal side lengths and now we will allow the side lengths to differ. We will choose these lengths so that they are close to each other and that the number of cells in each box is as close to $n/2$ as possible, though in general slightly below this. We use a similar construction to the special case, but each colour may need to be used in $O(n^{1/4})$ cells outside of its pair of boxes to make up $n$ cells of that colour. These $O(n\cdot n^{1/4})$ cells will be grouped together into $O(n^{1/4})$ rows and columns, so that any transversal would contain at most $O(n^{1/4})$ of them, which will not affect significantly the argument given above for the maximum size of a transversal.

\begin{proof}[\bf{Proof of \Cref{thm:upperbound}}]
Once again, we will index the rows and columns of the equi-$n$-square by elements of $[n]$. We will use rectangular boxes and so now choose the side lengths, denoted $a$ and $b$. 
For this let $m = \lceil \sqrt{n/2}\rceil$, $r = \lceil \sqrt{m^2-n/2}\rceil$ and take $a=m+r,b=m-r$. We first note that
$0\le m^2-n/2\le 2m\le 4\sqrt{n/2}$, and then, similarly, $0\le r^2-(m^2-n/2)\le 2r\le 4\sqrt{4\sqrt{n/2}}\le 8n^{1/4}$. Consequently, as  $n-2ab= 2(n/2-(m^2-r^2))$,
\[n-16n^{1/4}\le 2ab\le n\]
Moreover, $b\ge \sqrt{n/2}-r>\sqrt{n/2}-8n^{1/4}$.

For each $i\in [2a]$ and $j\in [2b]$ take the box
\[
B_{i,j}=\{ (x,y) : 1+(i-1)b \leq x \leq ib ~~ \text{and} ~~ 1+(j-1)a \leq y \leq ja \}.
\]
As before, we pair up the boxes and assign a unique colour to each pair, using that there will be $2ab\leq n$ pairs of boxes, so that each pair can have its own colour. Note that we must use $n - 2ab \le 16n^{1/4}$ entries of each colour elsewhere in the equi-$n$-square. Noting also that $b\leq a$, we assign the same colour to $B_{2k-1,2k-1}$ and $B_{2k,2k}$ for each $k \in [b]$.
We assign the same colour to $B_{i,j}$ and $B_{j,i}$ for each $1\leq i<j\leq b$.
Additionally, for each $b+1\leq s \leq a$ and $t\in [2b]$, we assign the same colour to $B_{2s-1,t}$ and $B_{2s,t}$. Finally, we use the leftover of the colours used as well as the other colours we have not yet used to arbitrarily colour the cells which have not yet been coloured. This finishes the construction of our equi-$n$-square.

Letting $S$ be the constructed square, we now show it has no transversal with size $n-\left(\frac{1}{2\sqrt{2}}+o(1)\right)\sqrt{n}$. Suppose then that $T$ is a transversal of $S$.
Let $S'$ be the sub-square consisting of all the cells in the first $2ab$ rows and $2ab$ columns of $S$, and let $T'=T \cap S'$. Note that $|T \cap (S\setminus S')| \leq 2(n-2ab)\leq 32n^{1/4}$. We now solely focus on $T'$ and use an almost identical argument to the one in the special case done before.
For each $k \in [2b]$, let
\[
C_k:=\Big(\bigcup_{i\in [2a]} B_{i,k}\Big) \bigcup \Big(\bigcup_{j\in [2b]} B_{k,j}\Big).
\]
Note that every $C_k$ is the disjoint union of $2a+2b-1$ boxes. We now show the corresponding version of Claim~\ref{clm:missing one colour}.

\begin{claim}\label{clm:general case missing one colour}
    For every $k\in [b]$, at least one colour used in $C_{2k-1}\cup C_{2k}$ is not used in $T'$.
\end{claim}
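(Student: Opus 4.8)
The plan is to repeat the proof of \Cref{clm:missing one colour}, but carried out inside the sub-square $S'$ and with the single side length $m$ replaced by the two side lengths $a$ (width) and $b$ (height). The reason $S'$ is the right arena is that within $S'$ each ``box colour'' (a colour assigned to one of the $2ab$ pairs of boxes tiling $S'$) occurs only in its two designated boxes: the $n-2ab\le 16n^{1/4}$ further cells of each such colour, together with every cell of the remaining $n-2ab$ colours, were placed in $S\setminus S'$. So fix $k\in[b]$, let blue be the colour of $B_{2k-1,2k-1}$ and $B_{2k,2k}$ (legitimate boxes since $2k\le 2b$, and $B_{2k-1,2k-1}\subseteq C_{2k-1}$), and note that within $S'$ blue appears only in $B_{2k-1,2k-1}\cup B_{2k,2k}$. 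If $T'$ uses no blue cell we are done; otherwise $T'$ contains a blue cell, necessarily inside $B_{j,j}$ for some $j\in\{2k-1,2k\}\subseteq[2b]$.

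I would then bound $|T'\cap C_j|$. All boxes $B_{i,j}$ with $i\in[2a]$ lie in one common block of $a$ columns; all boxes $B_{j,i}$ with $i\in[2b]$ lie in one common block of $b$ rows; and these two families of boxes meet only in $B_{j,j}$ (distinct row-blocks, and distinct column-blocks, are disjoint). As $T'$ meets each row and column at most once, it has at most $a$ cells among the first family, at most $b$ among the second, and at least one in $B_{j,j}$; hence $|T'\cap C_j|\le a+b-1$, so $T'$ has at most $a+b-2$ non-blue cells in $C_j$.

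It remains to check that $C_j$ uses at least $a+b-1$ non-blue colours each appearing, within $S'$, only in $C_j$; granting this, pigeonhole forces $T'$ to miss one of them, and that colour lies on $C_j\subseteq C_{2k-1}\cup C_{2k}$, which is the claim. Now $C_j$ is a disjoint union of $2a+2b-1$ boxes, of which $B_{j,j}$ is the unique diagonal box (hence blue); I claim the remaining $2a+2b-2$ boxes partition into $a+b-1$ colour classes, each lying entirely inside $C_j$. Indeed, a box $B_{i,j}$ with $i\ne j$ and $i\le 2b$ shares its colour with $B_{j,i}$, which lies in the row-block $j$ and hence in $C_j$; a box $B_{i,j}$ with $i>2b$ shares its colour (via the rule pairing $B_{2s-1,t}$ with $B_{2s,t}$) with a box $B_{i\pm1,j}$ in the same column-block $j$, hence in $C_j$; and every box $B_{j,i}$ with $i\ne j$ (so $i\le 2b$) shares its colour with $B_{i,j}\subseteq C_j$ as in the first case. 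These pairings are mutually consistent and exhaust the non-diagonal boxes of $C_j$, yielding $a+b-1$ colours, each with both of its $S'$-boxes inside $C_j$ (so each appears within $S'$ only in $C_j$), and all distinct from blue.

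The only genuinely new point compared with the proof of \Cref{clm:missing one colour} is this last pairing analysis: there every off-diagonal box of $C_j$ was paired transposedly ($A_{i,j}\leftrightarrow A_{j,i}$), whereas now the boxes in rows $2b+1,\dots,2a$ are paired vertically ($B_{2s-1,t}\leftrightarrow B_{2s,t}$), and one must note that this operation changes only the row-block and so leaves both boxes in $C_j$. Everything else — including the final tallying, which combines this claim over $k\in[b]$ with $|T\cap(S\setminus S')|\le 32n^{1/4}$ and $b\ge\sqrt{n/2}-8n^{1/4}$ to conclude \Cref{thm:upperbound} — is a routine transcription of the special case with $2m-1$ replaced by $a+b-1$.
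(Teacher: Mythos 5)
Your proposal is correct and follows essentially the same argument as the paper: assume $T'$ hits a blue cell in $B_{j,j}$, bound $|T'\cap C_j|\le a+b-1$ via the $a$ columns and $b$ rows meeting in $B_{j,j}$, and compare with the $a+b-1$ non-blue colours whose box-pairs lie entirely in $C_j$. Your explicit verification that the off-diagonal boxes of $C_j$ pair up within $C_j$ (transposed pairs for first index at most $2b$, vertical pairs for first index above $2b$) is the step the paper compresses into the count $(1/2)(2a-1+2b-1)=a+b-1$, and it is carried out correctly.
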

\begin{proof}[Proof of Claim~\ref{clm:general case missing one colour}]
Suppose that the colour in boxes $B_{2k-1,2k-1}$ and $B_{2k,2k}$ is blue.
If $T'$ has no blue cell, we would be done. Hence we assume that for some $j\in \{2k-1,2k\}$, $T'$ contains a cell in $B_{j,j}$. As $\bigcup_{i\in [2a]} B_{i,j}\subset C_j$ contains elements in exactly $a$ columns of $S'$, $T'$ has at most $a$ cells in $\bigcup_{i\in [2a]} B_{i,j}$. Similarly, $T'$ has at most  $b$ cells in $\bigcup_{i\in [2b]} B_{j,i}$. As $T'$ has a cell in $B_{j,j}$, it therefore has at most $a+b-1$ cells in $C_j$, and thus at most $a+b-2$ cells in $C_j$ which are not blue. However, $C_j$ uses $(1/2)(2a-1+2b-1)= a+b-1$ non-blue colours, each of which does not appear in $S' \setminus C_j$. Thus, $T'$ misses out on at least one colour used on $C_j \subset C_{2k-1}\cup C_{2k}$.
\renewcommand{\qedsymbol}{$\boxdot$}
\end{proof}
\renewcommand{\qedsymbol}{$\square$}
Observe that, within $S'$, any colour appears in at most two of the sets $C_{2k-1}\cup C_{2k}$ for different $k \in [b]$. This combined with \Cref{clm:general case missing one colour} implies that $T'$ must miss out on at least $b/2$ colours. Thus,
\[
|T|=|T'|+|T \cap (S\setminus S')|\leq n-\frac{b}{2}+32n^{1/4}{\leq} n-\frac{\sqrt{n}}{2\sqrt{2}}+36n^{1/4}=n-(1-o(1))\frac{\sqrt{n}}{2\sqrt{2}}.\qedhere
\]
\end{proof}


\section{{\boldmath Proof sketch: one large transversal in certain equi-$n$-squares}}\label{sec:simpler}

Suppose $S$ is an equi-$n$-square. When each symbol appears $o(n)$ times in each row and column, a large transversal is known to exist via the R\"odl nibble, where this large transversal satisfies certain pseudorandom conditions. Thus, it is important to consider equi-$n$-squares in which symbols appear many times in the same row or the same column -- to discuss our methods we will choose a special case where each symbol is repeated many times in a small number of columns. Suppose that $n=4m$ with $m\in \N$, and that the cells of $S$ can be partitioned into a set of blocks $\mathcal{B}$, where each $B\in \mathcal{B}$ contains $m$ cells which are all in the same column and all have the same symbol (see, for example, Figure~\ref{fig:simplifiedexample}). Using Hall's matching criterion it is not hard to show that the columns of $S$ can be perfectly matched into $\mathcal{B}$ with a matching $M$ such that  each column is matched to a block in that column, and every block in this matching has a different symbol. Let $S'$ be the subsquare of $S$ formed by retaining only those cells whose block appears in an edge of $M$, and note that there are $m$ entries in each column of $S'$.
Our aim (at the expense of finding only an \emph{almost}-perfect matching) is to find a random such matching $M$ for which it is likely that almost every row has around $m$ entries in $S'$. Using (a defect version of) Hall's matching criterion in this roughly regular $S'$, it is not hard then to find a collection of $(1-o(1))n$ cells in $S'$ which share no row or column, whereupon the definition of $S'$ implies they share no symbol. Thus, we will have found a large transversal in $S$ (see, again, Figure~\ref{fig:simplifiedexample}). We will find our random matching $M$ using a novel `bounded-dependence matching algorithm'.

\begin{figure}[t]
\begin{center}
\begin{tikzpicture}[define rgb/.code={\definecolor{mycolor}{rgb}{#1}}, rgb color/.style={define rgb={#1},mycolor},scale=0.9]

\def\wi{0.25cm}
\def\n{16}

\foreach \x in {0,1,...,\n}
\foreach \y in {0,1,...,\n}
{
\coordinate (A\x-\y) at ($\n*(0,\wi)+\x*(\wi,0)-\y*(0,\wi)$);
}


\foreach \x/\y/\col in {0/0/yellow,1/0/teal,2/0/yellow,3/0/violet,4/0/orange,5/0/brown,6/0/brown,7/0/red!60,8/0/green,9/0/teal,10/0/red,11/0/black!30,12/0/blue!60,13/0/red,14/0/blue!60,15/0/purple,0/1/yellow,1/1/teal,2/1/yellow,3/1/violet,4/1/orange,5/1/brown,6/1/brown,7/1/red!60,8/1/green,9/1/teal,10/1/red,11/1/black!30,12/1/blue!60,13/1/red,14/1/blue!60,15/1/purple,0/2/yellow,1/2/teal,2/2/yellow,3/2/violet,4/2/yellow,5/2/blue!30,6/2/brown,7/2/violet,8/2/green,9/2/teal,10/2/red,11/2/black!30,12/2/blue!60,13/2/blue!60,14/2/blue!60,15/2/purple,0/3/yellow,1/3/teal,2/3/yellow,3/3/violet,4/3/yellow,5/3/blue!30,6/3/brown,7/3/violet,8/3/green,9/3/teal,10/3/red,11/3/black!30,12/3/blue!60,13/3/blue!60,14/3/blue!60,15/3/purple,0/4/darkgreen!50,1/4/teal,2/4/magenta,3/4/blue!30,4/4/yellow,5/4/blue!30,6/4/gray,7/4/violet,8/4/green,9/4/brown,10/4/brown,11/4/blue!60,12/4/black!30,13/4/blue!60,14/4/black!30,15/4/magenta,0/5/darkgreen!50,1/5/teal,2/5/magenta,3/5/blue!30,4/5/yellow,5/5/blue!30,6/5/gray,7/5/violet,8/5/green,9/5/brown,10/5/brown,11/5/blue!60,12/5/black!30,13/5/blue!60,14/5/black!30,15/5/magenta,0/6/darkgreen!50,1/6/teal,2/6/magenta,3/6/blue!30,4/6/orange,5/6/brown,6/6/gray,7/6/red!60,8/6/green,9/6/brown,10/6/brown,11/6/blue!60,12/6/black!30,13/6/magenta,14/6/black!30,15/6/magenta,0/7/darkgreen!50,1/7/teal,2/7/magenta,3/7/blue!30,4/7/orange,5/7/brown,6/7/gray,7/7/red!60,8/7/green,9/7/brown,10/7/brown,11/7/blue!60,12/7/black!30,13/7/magenta,14/7/black!30,15/7/magenta,0/8/yellow,1/8/teal,2/8/purple,3/8/darkgreen!50,4/8/darkgreen!50,5/8/purple,6/8/gray,7/8/violet,8/8/green,9/8/gray,10/8/gray,11/8/pink,12/8/orange,13/8/magenta,14/8/green,15/8/violet,0/9/yellow,1/9/teal,2/9/purple,3/9/darkgreen!50,4/9/darkgreen!50,5/9/purple,6/9/gray,7/9/violet,8/9/green,9/9/gray,10/9/gray,11/9/pink,12/9/orange,13/9/magenta,14/9/green,15/9/violet,0/10/yellow,1/10/teal,2/10/purple,3/10/darkgreen!50,4/10/darkgreen!50,5/10/purple,6/10/gray,7/10/violet,8/10/green,9/10/gray,10/10/gray,11/10/pink,12/10/orange,13/10/pink,14/10/green,15/10/violet,0/11/yellow,1/11/teal,2/11/purple,3/11/darkgreen!50,4/11/darkgreen!50,5/11/purple,6/11/gray,7/11/violet,8/11/green,9/11/gray,10/11/gray,11/11/pink,12/11/orange,13/11/pink,14/11/green,15/11/violet,0/12/darkgreen!50,1/12/magenta,2/12/pink,3/12/pink,4/12/orange,5/12/blue!30,6/12/red,7/12/red!60,8/12/red!60,9/12/red!60,10/12/red,11/12/black!30,12/12/orange,13/12/pink,14/12/blue!30,15/12/purple,0/13/darkgreen!50,1/13/magenta,2/13/pink,3/13/pink,4/13/orange,5/13/blue!30,6/13/red,7/13/red!60,8/13/red!60,9/13/red!60,10/13/red,11/13/black!30,12/13/orange,13/13/pink,14/13/blue!30,15/13/purple,0/14/darkgreen!50,1/14/magenta,2/14/pink,3/14/pink,4/14/orange,5/14/blue!30,6/14/red,7/14/red!60,8/14/red!60,9/14/red!60,10/14/red,11/14/black!30,12/14/orange,13/14/red,14/14/blue!30,15/14/purple,0/15/darkgreen!50,1/15/magenta,2/15/pink,3/15/pink,4/15/orange,5/15/blue!30,6/15/red,7/15/red!60,8/15/red!60,9/15/red!60,10/15/red,11/15/black!30,12/15/orange,13/15/red,14/15/blue!30,15/15/purple}
{
\draw (A\x-\y) [thin,\col,fill=\col] rectangle ($(A\x-\y)+(\wi,\wi)$);
}

\draw ($(A0-0)-(1.75*\wi,0)+0.5*(0,\wi)$) node {\textbf{a)}};

\foreach \x in {0,1,...,\n}
{
{
\draw [black] ($(A\x-\n)+(0,\wi)$) --  ($(A\x-\n)+\n*(0,\wi)+(0,\wi)$);
\draw [black] ($(A0-\x)+(0,\wi)$) --  ($(A0-\x)+\n*(\wi,0)+(0,\wi)$);
}
}

\end{tikzpicture}
\hspace{0.25cm}
\begin{tikzpicture}[define rgb/.code={\definecolor{mycolor}{rgb}{#1}}, rgb color/.style={define rgb={#1},mycolor},scale=0.9]

\def\wi{0.25cm}
\def\spacer{2cm}
\def\n{16}
\def\rad{0.07cm}

\foreach \x in {0,1}
\foreach \y in {1,...,\n}
{
\coordinate (A\x-\y) at ($\n*(0,\wi)+\x*(\spacer,0)-\y*(0,\wi)+(0,\wi)$);
}

\draw (A0-3) -- (A1-1);		\draw (A0-5) -- (A1-1);
  \draw (A0-4) -- (A1-2);	\draw (A0-5) -- (A1-2);
              \draw (A0-10) -- (A1-3);

\draw (A0-2) -- (A1-5);	\draw (A0-3) -- (A1-5);											\draw (A0-14) -- (A1-5);		\draw (A0-16) -- (A1-5);
  \draw (A0-4) -- (A1-6);												\draw (A0-16) -- (A1-6);
\draw (A0-3) -- (A1-7);			\draw (A0-6) -- (A1-7);
            \draw (A0-9) -- (A1-8);	\draw (A0-10) -- (A1-8);
                        \draw (A0-15) -- (A1-9);
\draw (A0-3) -- (A1-10);	\draw (A0-4) -- (A1-10);								\draw (A0-12) -- (A1-10);		\draw (A0-14) -- (A1-10);
  \draw (A0-4) -- (A1-11);											\draw (A0-15) -- (A1-11);
      \draw (A0-6) -- (A1-12);	\draw (A0-7) -- (A1-12);			\draw (A0-10) -- (A1-12);	\draw (A0-11) -- (A1-12);
              \draw (A0-10) -- (A1-13);	\draw (A0-11) -- (A1-13);
        \draw (A0-7) -- (A1-14);							\draw (A0-14) -- (A1-14);
                    \draw (A0-13) -- (A1-15);		\draw (A0-15) -- (A1-15);
                  \draw (A0-12) -- (A1-16);	\draw (A0-13) -- (A1-16);	\draw (A0-14) -- (A1-16);	\draw (A0-15) -- (A1-16);

\draw (A0-1)to[out=10, in=170, relative=true] (A1-1);
\draw (A0-1)to[out=5, in=175,relative=true] (A1-1);
\draw (A0-1) to[out=10, in=170, relative=true] (A1-2);
\draw (A0-1) -- (A1-2);

\draw (A0-5)to[out=5, in=175,relative=true] (A1-4);
\draw (A0-5)to[out=-5, in=-175,relative=true] (A1-4);

\draw (A0-13) to[out=5, in=175,relative=true] (A1-4);
\draw (A0-13) to[out=-5, in=-175,relative=true] (A1-4);

\draw (A0-8) to[out=5, in=175,relative=true] (A1-6);
\draw (A0-8) to[out=-5, in=-175,relative=true] (A1-6);

\draw (A0-16) to[out=5, in=175,relative=true] (A1-7);
\draw (A0-16) to[out=-5, in=-175,relative=true] (A1-7);

\draw (A0-8) to[out=5, in=175,relative=true] (A1-8);
\draw (A0-8) to[out=-5, in=-175,relative=true] (A1-8);

\draw (A0-6) to[out=5, in=175,relative=true] (A1-11);
\draw (A0-6) to[out=-5, in=-175,relative=true] (A1-11);

\draw (A0-7) to[out=5, in=175,relative=true] (A1-13);
\draw (A0-7) to[out=-5, in=-175,relative=true] (A1-13);

\draw (A0-11) to[out=5, in=175,relative=true] (A1-14);
\draw (A0-11) to[out=-5, in=-175,relative=true] (A1-14);

\draw (A0-12) to[out=5, in=175,relative=true] (A1-15);
\draw (A0-12) to[out=-5, in=-175,relative=true] (A1-15);


\draw (A0-2) -- (A1-3);
\draw (A0-2) to[out=5, in=175,relative=true] (A1-3);
\draw (A0-2) to[out=-5, in=-175,relative=true] (A1-3);

\draw (A0-9) -- (A1-9);
\draw (A0-9) to[out=5, in=175,relative=true] (A1-9);
\draw (A0-9) to[out=-5, in=-175,relative=true] (A1-9);

\foreach \x in {0,1}
\foreach \y in {1,...,\n}
{
\draw (A\x-\y) [fill] circle [radius=\rad];
}

\draw (A1-1) [fill=yellow] circle [radius=\rad];
\draw (A1-2) [fill=darkgreen!50] circle [radius=\rad];
\draw (A1-3) [fill=teal] circle [radius=\rad];
\draw (A1-4) [fill=orange] circle [radius=\rad];
\draw (A1-5) [fill=magenta] circle [radius=\rad];
\draw (A1-6) [fill=violet] circle [radius=\rad];
\draw (A1-7) [fill=purple] circle [radius=\rad];
\draw (A1-8) [fill=red!60] circle [radius=\rad];
\draw (A1-9) [fill=green] circle [radius=\rad];
\draw (A1-10) [fill=pink] circle [radius=\rad];
\draw (A1-11) [fill=blue!30] circle [radius=\rad];
\draw (A1-12) [fill=brown] circle [radius=\rad];
\draw (A1-13) [fill=gray] circle [radius=\rad];
\draw (A1-14) [fill=red] circle [radius=\rad];
\draw (A1-15) [fill=black!30] circle [radius=\rad];
\draw (A1-16) [fill=blue] circle [radius=\rad];



\draw ($(A0-0)-(1.75*\wi,0)+0.5*(0,\wi)-0.5*(\wi,\wi)-0.5*(\wi,0)$) node {\textbf{b)}};

\end{tikzpicture}
\hspace{0.25cm}
\begin{tikzpicture}[define rgb/.code={\definecolor{mycolor}{rgb}{#1}}, rgb color/.style={define rgb={#1},mycolor},scale=0.9]

\def\wi{0.25cm}
\def\spacer{2cm}
\def\n{16}
\def\rad{0.07cm}

\foreach \x in {0,1}
\foreach \y in {1,...,\n}
{
\coordinate (A\x-\y) at ($\n*(0,\wi)+\x*(\spacer,0)-\y*(0,\wi)+(0,\wi)$);
}

\draw [thick,yellow] (A0-1) -- (A1-1);
\draw [thick,teal] (A0-2) -- (A1-3);
\draw [thick,pink] (A0-3) -- (A1-10);
\draw [thick,blue!30] (A0-4) -- (A1-11);
\draw [thick,darkgreen!50] (A0-5) -- (A1-2);
\draw [thick,brown] (A0-6) -- (A1-12);
\draw [thick,red] (A0-7) -- (A1-14);
\draw [thick,violet] (A0-8) -- (A1-6);
\draw [thick,red!60] (A0-9) -- (A1-8);

\draw [thick,black!30] (A0-12) -- (A1-15);
\draw [thick,orange] (A0-13) -- (A1-4);
\draw [thick,green] (A0-14) -- (A1-9);
\draw [thick,blue] (A0-15) -- (A1-16);
\draw [thick,magenta] (A0-16) -- (A1-5);

\draw (A0-1) -- (A1-1);
\draw (A0-2) -- (A1-3);
\draw (A0-3) -- (A1-10);
\draw (A0-4) -- (A1-11);
\draw (A0-5) -- (A1-2);
\draw (A0-6) -- (A1-12);
\draw (A0-7) -- (A1-14);
\draw (A0-8) -- (A1-6);
\draw (A0-9) -- (A1-8);

\draw (A0-12) -- (A1-15);
\draw (A0-13) -- (A1-4);
\draw (A0-14) -- (A1-9);
\draw (A0-15) -- (A1-16);
\draw (A0-16) -- (A1-5);

\foreach \x in {0,1}
\foreach \y in {1,...,\n}
{
\draw (A\x-\y) [fill] circle [radius=\rad];
}

\draw (A1-1) [fill=yellow] circle [radius=\rad];
\draw (A1-2) [fill=darkgreen!50] circle [radius=\rad];
\draw (A1-3) [fill=teal] circle [radius=\rad];
\draw (A1-4) [fill=orange] circle [radius=\rad];
\draw (A1-5) [fill=magenta] circle [radius=\rad];
\draw (A1-6) [fill=violet] circle [radius=\rad];
\draw (A1-7) [fill=purple] circle [radius=\rad];
\draw (A1-8) [fill=red!60] circle [radius=\rad];
\draw (A1-9) [fill=green] circle [radius=\rad];
\draw (A1-10) [fill=pink] circle [radius=\rad];
\draw (A1-11) [fill=blue!30] circle [radius=\rad];
\draw (A1-12) [fill=brown] circle [radius=\rad];
\draw (A1-13) [fill=gray] circle [radius=\rad];
\draw (A1-14) [fill=red] circle [radius=\rad];
\draw (A1-15) [fill=black!30] circle [radius=\rad];
\draw (A1-16) [fill=blue] circle [radius=\rad];



\draw ($(A0-0)-(1.75*\wi,0)+0.5*(0,\wi)-0.5*(\wi,\wi)-0.5*(\wi,0)$) node {\textbf{c)}};

\end{tikzpicture}
\hspace{0.25cm}
\begin{tikzpicture}[define rgb/.code={\definecolor{mycolor}{rgb}{#1}}, rgb color/.style={define rgb={#1},mycolor},scale=0.9]

\def\wi{0.25cm}
\def\n{16}

\foreach \x in {0,1,...,\n}
\foreach \y in {0,1,...,\n}
{
\coordinate (A\x-\y) at ($\n*(0,\wi)+\x*(\wi,0)-\y*(0,\wi)$);
}


\foreach \x/\y/\col in {5/0/brown,11/0/black!30,14/0/blue!60,5/1/brown,11/1/black!30,14/1/blue!60,7/2/violet,11/2/black!30,14/2/blue!60,7/3/violet,11/3/black!30,14/3/blue!60,1/4/teal,3/4/blue!30,7/4/violet,15/4/magenta,1/5/teal,3/5/blue!30,7/5/violet,15/5/magenta,1/6/teal,3/6/blue!30,5/6/brown,15/6/magenta,1/7/teal,3/7/blue!30,5/7/brown,15/7/magenta,0/8/yellow,4/8/darkgreen!50,12/8/orange,13/8/green,0/9/yellow,4/9/darkgreen!50,12/9/orange,13/9/green,0/10/yellow,4/10/darkgreen!50,12/10/orange,13/10/green,0/11/yellow,4/11/darkgreen!50,12/11/orange,13/11/green,2/12/pink,6/12/red,8/12/red!60,2/13/pink,6/13/red,8/13/red!60,2/14/pink,6/14/red,8/14/red!60,2/15/pink,6/15/red,8/15/red!60}
{
\draw (A\x-\y) [thin,\col,fill=\col] rectangle ($(A\x-\y)+(\wi,\wi)$);
}

\foreach \x/\y in {2/15,6/12,8/14,0/8,4/11,12/9,13/10,1/5,3/4,5/7,7/3,11/1,14/0,15/6}
{
\draw (A\x-\y) -- ($(A\x-\y)+(\wi,\wi)$);
\draw ($(A\x-\y)+(\wi,0)$) -- ($(A\x-\y)+(0,\wi)$);
}

\draw ($(A0-0)-(1.75*\wi,0)+0.25*(0,\wi)$) node {\textbf{d)}};

\foreach \x in {0,1,...,\n}
{
{
\draw [black] ($(A\x-\n)+(0,\wi)$) --  ($(A\x-\n)+\n*(0,\wi)+(0,\wi)$);
\draw [black] ($(A0-\x)+(0,\wi)$) --  ($(A0-\x)+\n*(\wi,0)+(0,\wi)$);
}
}

\end{tikzpicture}
\end{center}
\caption{\textbf{a)} A subsquare $S$ in which each column can be decomposed into 4 blocks with size 4 which have the same colour (for ease of visualisation, cells of each colour are often drawn next to each other in natural blocks, but this is not required in general).
\textbf{b)} The multigraph $K$ corresponding to $S$. \textbf{c)} A matching $M$ in $K$.  \textbf{d)} The subsquare $S'\subset S$ corresponding to the matching $M$. Any set of cells which share no row or column must then share no colour and hence be a transversal; an example is marked by crosses.\label{fig:simplifiedexample}}
\end{figure}

Before discussing this in more detail, along with how we find an appropriate random matching, let us note that at a very high level this is inspired by work of the second author with Pokrovskiy and Sudakov on Ringel's conjecture~\cite{montgomery2021proof}. That is to say, when the codegrees are small, we can find a large transversal using semi-random methods. Where many colours appear in the same row or the same column, some indication of which colour we might wish to use for each column could be selected deterministically using some matching like the initial (non-random) matching $M$. Key (as in \cite{montgomery2021proof}) is to find an appropriate way to randomise this deterministic selection so that it can be used along with the semi-random method. Indeed, though proving Theorem~\ref{thm:lowerbounddecomp} in full will involve several more complications, this sketch encompasses the main novelty in our methods, which can then be developed into a more complex scheme using the semi-random method, as described and done in Section~\ref{sec:lowerbound}.

Let us return to our equi-$n$-square $S$ and the decomposition $\mathcal{B}$ of $S$ into blocks of size $m$. Let $\mathcal{J}$ be the set of columns and let $\mathcal{A}$ be the set of symbols. Let $K$ be an auxiliary bipartite multigraph with vertex classes $\mathcal{J}$ and $\mathcal{A}$, where, for each $B\in \mathcal{B}$, we add an edge labelled with $B$ between the column and the symbol of $B$. Note that $K$ is a 4-regular bipartite multigraph. Therefore, using Hall's matching criterion, it can be decomposed into 4 matchings, say $M_1,M_2,M_3,M_4$.

We will use $M_1,M_2,M_3,$ and $M_4$ to generate our random matching $M$, by finding random matchings $M_1'\subset M_1\cup M_2$ and $M_2'\subset M_3\cup M_4$, and then selecting $M\subset M_1'\cup M_2'$ randomly.
For example, for $M_1'$ the initial idea is to observe that $M_1\cup M_2$ is a union of cycles and, for each such cycle, randomly choose the `odd or even' edges of the cycle and add them to $M_1'$. This creates a matching, but if the cycles are long then there will be a lot of dependence between whether different edges appear in $M_1'$, or not, preventing us from showing that the number of entries in each row in blocks in $M_1'$ is suitably concentrated.
The key idea then is to take $M_1\cup M_2$ and delete a few edges to break the cycles into paths/cycles which are each not too long. Doing similarly with $M_3\cup M_4$ to generate $M_2'$, we then similarly generate $M$ from $M_1'\cup M_2'$ (which is the disjoint union of paths/cycles rather than just cycles). By choosing the maximum length of the paths/cycles allowed after the edge deletion carefully, we can (as described below) ensure that $M$ is a large (almost-perfect) matching while limiting the dependence over the appearance of different edges in $M$ with each other. We then track certain Lipschitz random variables and show that they are likely to be concentrated.

More specifically, let $k=n^{1/3}\log^{-2}n$. Let $S_1,\ldots, S_r$ be a minimal set of vertex-disjoint paths/cycles in $M_1\cup M_2$, each with length at most $k$, the union of whose vertex sets is $V(M_1\cup M_2)$. Note that $|E(M_1\cup M_2)\setminus E(S_1\cup \ldots \cup S_r)|\leq 2n/k$. For each $i\in [r]$, properly edge colour $S_1,\ldots,S_r$ with red and blue (noting all the cycles are even). For each $i\in [r]$, choose $x_i$ uniformly and independently at random from $\{0,1\}$. Let $M_1'$ be the union across $i\in [r]$ of the red edges from $E(S_i)$ with $x_i=0$ and the blue edges of $E(S_i)$ with $x_i=1$. Observe that $M_1'$ is a matching.
Similarly, use $M_3$ and $M_4$ to randomly generate $M_2'$. Then,  similarly use $M_1'\cup M_2'$ to generate $M$.

We use $M$ to tell us from which column we will select a cell with which symbol in it (and which symbols we will not select a cell for, but these will be a small number of symbols overall).
 For this, let $\mathcal{I}$ be the set of $n$ rows. Form the auxiliary bipartite graph $L$ on vertex classes $\mathcal{I}$ and $\mathcal{J}$, where there is an edge $ij$ exactly if $j$ is matched to a symbol with an edge in $M$ corresponding to a block containing the cell $(i,j)$. 
 Observe, as $M$ is a matching, that a matching in $L$ corresponds to a transversal in $S$.

 For each row $i\in \mathcal{I}$, let $m_1(i)$ be the number of edges $jc\in M_1$ which correspond to a block containing a cell on row $i$. 
 Define $m_2(i)$, $m_3(i)$, $m_4(i)$, $m_1'(i)$, $m_2'(i)$ and $m(i)$ similarly. Note that $m_1(i)+m_2(i)+m_3(i)+m_4(i)=n$ and $\E(m_1'(i))\leq (m_1(i)+m_2(i))/2$. As $|E(M_1\cup M_2)\setminus E(S_1\cup \ldots \cup S_r)|\leq 2n/k$, we have
\[
\E (m_1'(i))\geq \frac{m_1(i)+m_2(i)}{2}-\frac{2n}{k}\geq \frac{m_1(i)+m_2(i)}{2}-n^{2/3}\log^{O(1)}n.
\]
Crucially, changing the value of any one of $x_i$, $i\in [r]$, changes the value of $m_1'(i)$ by at most $k=n^{1/3}\log^{-2}n$. Thus, by an application of McDiarmid's inequality (Lemma~\ref{lem:mcdiarmid}), we get that, with probability $1-o(n^{-1})$,
\[
m_1'(i)=\frac{m_1(i)+m_2(i)}{2}\pm \left(n^{2/3}\log^{O(1)}n+O\left(\sqrt{n k\log n}\right)\right)=\frac{m_1(i)+m_2(i)}{2}\pm n^{2/3}\log^{O(1)}n.
\]
Similarly, with probability $1-o(n^{-1})$, $m_2'(i)=\frac{m_3(i)+m_4(i)}{2}\pm n^{2/3}\log^{O(1)}n$. Similarly, then, with probability $1-o(n^{-1})$,
\[
d_L(i)=m(i)=\frac{m_1(i)+m_2(i)+m_3(i)+m_4(i)}{4}\pm n^{2/3}\log^{O(1)}n=\frac{n}{4}\pm n^{2/3}\log^{O(1)}n=m\pm n^{2/3}\log^{O(1)}n.
\]

For each column $j\in \mathcal{J}$ with $j$ in some edge in $M$, we have $d_L(j)=m=n/4$, and otherwise $d_L(j)=0$. Then, for each $U\subset \mathcal{I}$, we have $(m-n^{2/3}\log^{O(1)}n)\cdot |U|\leq m\cdot |N_L(U)|$, so that
\[
|N_L(U)|\geq |U|\left(1-\frac{n^{2/3}\log^{O(1)}n}{m}\right)\geq |U|-n^{2/3}\log^{O(1)}n.
\]
Thus, by a defect version of Hall's matching theorem, $L$ contains a matching with size $n-n^{2/3}\log^{O(1)}n$, and hence $S$ has a transversal with $n- n^{2/3}\log^{O(1)}n$ cells.


\section{{\boldmath Almost decomposing equi-$n$-squares}}\label{sec:lowerbound}

We will use the following result of Molloy and Reed~\cite{molloy2000near}.

\begin{theorem}\label{thm:molloyreed}
For all $k$ there is a constant $C_k$ such that any $k$-uniform hypergraph of maximum codegree $B$ and maximum degree $\Delta$ has list chromatic index at most $\left(1+C_k(B/\Delta)^{1/k}(\log (\Delta/B))^4\right)\Delta$.
\end{theorem}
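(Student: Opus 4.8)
The plan is to prove this by the semi-random (``R\"odl nibble'') method, in the form Kahn used to show that uniform hypergraphs of small codegree have list chromatic index $(1+o(1))\Delta$; the point is to run the nibble carefully enough to read off the stated error term. Equivalently we must list-colour the line graph $\mathcal L=\mathcal L(H)$ (vertices the edges of $H$, two adjacent when they meet in a vertex of $H$) from arbitrary lists of size $L:=\lceil(1+\eps_0)\Delta\rceil$, where $\eps_0:=C_k(B/\Delta)^{1/k}(\log(\Delta/B))^4$. After enlarging $C_k$ we may assume $\Delta/B$ exceeds a large constant depending on $k$, since otherwise $\eps_0\Delta$ already dominates the trivial bound $\chi'_\ell(H)\le k\Delta$ (the line graph $\mathcal L$ having maximum degree below $k\Delta$). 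We build the colouring in rounds, maintaining a partial proper colouring $\phi_i$ of $\mathcal L$, the sub-hypergraph $H_i$ of edges left uncoloured by $\phi_i$, and, for every uncoloured edge $e$: an \emph{available list} $L_i(e)\subseteq L(e)$ of colours unused by $\phi_i$ on edges meeting $e$, with $|L_i(e)|\ge\ell_i$; and the number $d_i(e)$ of uncoloured edges meeting $e$, with $d_i(e)\le t_i$. The crucial invariant is $\ell_i\ge(1+\eps_i)t_i$ with $\eps_i>0$; initially $t_0<k\Delta$, $\ell_0=L$, $\eps_i=\eps_0$. The moment we stop, the invariant gives $|L_i(e)|>d_i(e)$ for every uncoloured $e$, so the remaining edges can be coloured greedily one at a time, finishing the proof.

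One round, with parameters $(\ell,t,\eps)=(\ell_i,t_i,\eps_i)$: independently, each uncoloured edge becomes \emph{active} with a small probability $p=p(k)$; each active $e$ picks a colour $c(e)$ uniformly from $L_i(e)$; and $e$ \emph{keeps} $c(e)$ — becoming permanently coloured with it — exactly when no other edge meeting $e$ chose the same colour. The kept colours extend $\phi_i$ to a proper partial colouring $\phi_{i+1}$, and we update the available lists. A first-moment computation, using $\sum_{f\text{ meets }e}|L_i(f)|^{-1}\le t/\ell\le(1+\eps)^{-1}$, shows every uncoloured edge is kept with some probability $q=q(k)\in(0,1)$, so $\E[d_{i+1}(e)]\approx(1-q)d_i(e)$; likewise the expected number of colours deleted from $L_i(e)$ is at most roughly $q\sum_{f\text{ meets }e}|L_i(e)\cap L_i(f)|/|L_i(f)|\le q\,d_i(e)$, whence $\E[|L_{i+1}(e)|]\ge(1-q)|L_i(e)|-(\text{lower order})$. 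So the ratio $\ell/t$ is preserved \emph{in expectation}; we set $t_{i+1}:=(1-q)t_i$ and $\ell_{i+1}:=(1-q)\ell_i-\eta_i$, where $\eta_i$ absorbs the second-order terms together with the concentration error discussed next, giving $\eps_{i+1}\ge\eps_i-\eta_i/t_{i+1}$. The whole scheme then succeeds provided $\sum_i\eta_i/t_i<\eps_0$.

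The engine — and the main obstacle — is concentration, and this is exactly where the codegree hypothesis is used. The quantities $d_{i+1}(e)$ and $|L_{i+1}(e)|$ are functions of the independent choices (active?, which colour) over the edges near $e$, and the codegree bound $B$ limits the relevant bounded-difference and certificate parameters: the number of edges meeting $e$ that also meet another fixed edge $g$ is $O(k^2B)$ apart from edges through a vertex of $e\cap g$, and ``$e$ fails to keep its colour'' is certifiable by a single conflicting choice — which together let a Talagrand-type inequality show that $d_{i+1}(e)$ and $|L_{i+1}(e)|$ are each tightly concentrated about their means with failure probability $\Delta^{-\omega(1)}$, hence simultaneously for all uncoloured $e$ after a union bound. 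Making this quantitative — establishing that the per-round deviation, and hence $\eta_i$, is $\mathrm{poly}(\log(\Delta/B))$ times an explicit function of $B$, $k$ and $t_i$, and then choosing $p$ and the stopping parameter $t_{\mathrm{stop}}$ optimally so that the accumulated $\sum_i\eta_i/t_i$ stays below $\eps_0$ — is precisely what produces the exponent $1/k$ and the fourth power of the logarithm, and is the delicate heart of the argument: it forces one to track the means of $\ell_i$ and $t_i$ edge-by-edge, to control the \emph{variance} of the list-deletion count (which carries its own $B$- and $k$-dependent factor), and to handle the exceptional high-probability events cleanly. The list feature, by contrast, costs nothing: the entire analysis is local to each edge's own available list, so, following Kahn, the list version is no harder than the ordinary chromatic-index statement.
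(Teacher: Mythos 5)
This statement is not proved in the paper at all: it is quoted verbatim as an external black box from Molloy and Reed \cite{molloy2000near}, so there is no in-paper argument to compare yours against. Judged on its own terms, your proposal correctly identifies the method Molloy and Reed actually use — the semi-random (nibble) colouring of the line graph, with the codegree bound $B$ entering through the bounded-difference/certificate parameters of a Talagrand-type concentration inequality, and the invariant $|L_i(e)|\ge(1+\eps_i)d_i(e)$ maintained until a greedy finish. That is the right skeleton.

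However, as a proof of the stated theorem there is a genuine gap, and you name it yourself: the "delicate heart" — deriving that the accumulated error $\sum_i\eta_i/t_i$ can be kept below $C_k(B/\Delta)^{1/k}(\log(\Delta/B))^4$ — is deferred rather than carried out. That quantitative bookkeeping is not a routine epsilon-chase; it is the entire content of the theorem. What you have written establishes (modulo the standard but nontrivial concentration computations) only the qualitative statement that the list chromatic index is $(1+o(1))\Delta$ when $B=o(\Delta)$, i.e.\ Kahn's theorem, not the explicit error term that the present paper needs in order to extract a polynomial saving $D^{-\mu}$ in Corollary~2.2. To close the gap you would need to (i) make the per-round concentration error explicit as a function of $B$, $k$, $t_i$ and the failure probability, (ii) show the iteration can be run for roughly $\log(\Delta/B)$ rounds before $t_i$ drops to the stopping threshold, and (iii) optimise the stopping threshold against the accumulated error to produce the exponent $1/k$ and the fourth power of the logarithm. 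One smaller caveat: your opening reduction ("we may assume $\Delta/B$ exceeds a large constant") silently uses that $(\log(\Delta/B))^4$ is bounded below, which fails as $\Delta/B\to 1$; the theorem as stated degenerates there and any complete write-up must address that regime (or note the implicit convention in \cite{molloy2000near}) rather than discard it.
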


Specifically, we will use the following corollary. 

\begin{corollary}\label{cor:molloyreed}
For all $\eps >0$ there is a $\mu >0$ and $D_0$ such that the following holds for each $D \geq D_0$. Suppose $\mathcal{H}$ is a $3$-uniform hypergraph with maximum degree at most $D$ such that, for each distinct $x,y\in V(\mathcal{H})$, either $\mathrm{cod}_{\mathcal{H}}(x,y)\leq D^{1-\eps}$ or every edge contains either both $x$ and $y$ or none of them.

Then, $\mathcal{H}$ has chromatic index at most $\left(1+D^{-\mu}\right)D$.
\end{corollary}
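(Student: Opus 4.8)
The plan is to derive the corollary from Theorem~\ref{thm:molloyreed} applied with $k=3$, after first normalising the hypergraph so that the ``heavy'' codegrees (those coming from a pair $x,y$ contained in every edge that touches either of them) cause no trouble. First I would observe that the relation ``$x\sim y$ if every edge contains both $x$ and $y$ or neither'' is an equivalence relation on $V(\mathcal{H})$: reflexivity and symmetry are immediate, and transitivity follows because if $x\sim y$ and $y\sim z$ then any edge $e$ either contains $y$ — hence contains $x$ and $z$ — or avoids $y$ — hence avoids $x$ and $z$. Since $\mathcal{H}$ is $3$-uniform, each such equivalence class meeting some edge has size at most $3$; classes of size $1$ are irrelevant, and I would contract each class of size $2$ or $3$ to a single vertex. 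This yields a multi-hypergraph $\mathcal{H}'$ whose edges have size $1$, $2$, or $3$, in which the degree of each (super)vertex is still at most $D$, and in which every pair of distinct vertices now has codegree at most $D^{1-\eps}$. A proper edge colouring of $\mathcal{H}'$ pulls back to a proper edge colouring of $\mathcal{H}$ with the same number of colours, so it suffices to bound $\chi'(\mathcal{H}')$.

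Next I would reduce the non-uniform hypergraph $\mathcal{H}'$ to a genuine $3$-uniform one so that Theorem~\ref{thm:molloyreed} applies verbatim. The standard device is to pad: introduce a small pool of fresh ``dummy'' vertices and extend each edge of size $1$ or $2$ to size $3$ by adding dummy vertices, arranging that each dummy vertex lies in few edges so that the maximum degree stays $(1+o(1))D$ and the maximum codegree stays $O(D^{1-\eps})$ — or, even more cheaply, one can simply note that Theorem~\ref{thm:molloyreed} with $k=3$ is usually stated (and proved) for hypergraphs of \emph{rank} at most $3$, in which case no padding is needed. Either way we land in the hypothesis of Theorem~\ref{thm:molloyreed} with $k=3$, maximum degree $\Delta \le (1+o(1))D$, and maximum codegree $B \le D^{1-\eps}$ (assuming $D$ large; if $B=0$ the hypergraph is a simple $3$-graph and an even easier classical bound applies, so we may assume $B\ge 1$).

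Finally I would plug these parameters into the bound of Theorem~\ref{thm:molloyreed}. With $\Delta \le (1+o(1))D$ and $B\le D^{1-\eps}$ we get
\[
\chi'(\mathcal{H}') \;\le\; \Bigl(1 + C_3 (B/\Delta)^{1/3}\bigl(\log(\Delta/B)\bigr)^4\Bigr)\Delta
\;\le\; \Bigl(1 + C_3\, D^{-\eps/3}\, (\log D)^4\Bigr)(1+o(1))D.
\]
Since $D^{-\eps/3}(\log D)^4 = D^{-\eps/3+o(1)}$, choosing any $\mu$ with $0<\mu<\eps/3$ makes the right-hand side at most $(1+D^{-\mu})D$ for all $D\ge D_0$, where $D_0=D_0(\eps)$ absorbs the constant $C_3$ and the lower-order terms. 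This gives $\chi'(\mathcal{H}) = \chi'(\mathcal{H}') \le (1+D^{-\mu})D$, as required.

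The only genuinely delicate point is the first paragraph: one must check that the ``heavy'' pairs really do form an equivalence-class structure compatible with $3$-uniformity, so that contracting them is legitimate and does not create a pair of high codegree among the new vertices. I expect this bookkeeping — verifying transitivity, that contracted classes have size $\le 3$, and that codegrees among supervertices are now all $\le D^{1-\eps}$ — to be the main (though still routine) obstacle; everything after that is a direct substitution into Molloy–Reed.
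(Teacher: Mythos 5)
Your proposal is correct and follows essentially the same route as the paper: both neutralise the heavy pairs by a local surgery and then feed the result into Theorem~\ref{thm:molloyreed} with $B\le D^{1-\eps}$ and $\Delta\le D$. The paper's surgery is precisely the cheapest instantiation of your padding step: for each heavy pair it deletes one of its two vertices and gives every edge through that pair its own fresh replacement vertex $v_e$, which keeps the hypergraph $3$-uniform (so no separate contraction-then-pad stage is needed) and makes all codegrees involving the new vertices at most $1$. One caution: Theorem~\ref{thm:molloyreed} as stated in the paper is for $k$-\emph{uniform} hypergraphs, so your fallback of invoking a rank-$\le 3$ version is not available here and you should carry out the per-edge dummy padding explicitly.
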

\begin{proof} Let $0<1/D_0\ll\mu\ll \eps$.
Let $E=\{xy:\mathrm{cod}_{\mathcal{H}}(x,y)>D^{1-\eps}\}$, and note that $E$ is a matching. For each $f\in E$, pick $x_f\in V(f)$. For each $f\in E$ and $e\in E(\mathcal{H})$ with $V(f)\subset V(e)$ create a new vertex, $v_e$. Let $\mathcal{H}'$ be the hypergraph with vertex set $(V(\mathcal{H})\setminus\{x_f:f\in E\})\cup (\cup_{f\in E}\cup_{e\in E(\mathcal{H})}v_e)$ and edge set
 \[
(E(\mathcal{H})\setminus (\cup_{f\in E}\cup_{e\in E(\mathcal{H}):V(e)\subset V(f)}e))\cup (\cup_{f\in E}\cup_{e\in E(\mathcal{H}):V(e)\subset V(f)}(\{v_e\}\cup (V(e)\setminus \{x_f\}))).
\]
 Observe that $\Delta(\mathcal{H}')\leq D$ and $\mathrm{cod}_{\mathcal{H}'}(x,y)\leq D^{1-\eps}$ for each $x,y\in V(\mathcal{H}')$. Then, by Theorem~\ref{thm:molloyreed}, we have $\chi'(\mathcal{H}')\leq \left(1+D^{-\mu}\right)D$.
 Let $\chi'$ be a colouring of $E(\mathcal{H}')$ with at most $\left(1+D^{-\mu}\right)D$ colours so that the edges of each colour forms a matching. For each $e\in E(\mathcal{H}')\cap E(\mathcal{H})$ let $\chi(e)=\chi'(e)$. For each $e\in E(\mathcal{H})\setminus E(\mathcal{H}')$, take $f\in E$ such that $V(f)\subset V(e)$ and let $\chi(e)=\chi'((V(e)\setminus \{x_f\})\cup \{v_e\})$. Observing that $\chi$ is a colouring of $E(\mathcal{H})$ into at most $\left(1+D^{-\mu}\right)D$ colours in which the edges of each colour form a matching, completes the proof.
\end{proof}

We will use this with our bounded-dependence random matching algorithm to prove the following.

\begin{theorem}\label{thm:semiunifieddecomp} There are some $\xi,\eta>0$ and $D_0$ such that the following holds for each $D\geq D_0$. Let $\mathcal{H}$ be a $3$-uniform hypergraph with maximum degree $D$ and at least $\sqrt{D}$ vertices. Suppose that the graph on $V(\mathcal{H})$ with edges $xy$ present if $\mathrm{cod}_{\mathcal{H}}(x,y)\geq D^{1-\eta}$ is bipartite.

Then, there is some $\mathcal{H}'\subset \mathcal{H}$ with $e(\mathcal{H}')\geq e(\mathcal{H})-|\mathcal{H}|\cdot D^{1-\xi}$ and
\[
\chi'(\mathcal{H}')\leq (1+D^{-\xi})D.
\]
\end{theorem}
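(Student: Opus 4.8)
\noindent\emph{Proof proposal.}
The plan is to build on the block‑decomposition idea sketched in Section~\ref{sec:simpler}: use the bounded‑dependence random matching algorithm to replace the (few) high‑codegree pairs of $\mathcal H$ by a randomised sub‑structure that is both low in codegree and approximately regular, and then colour via the Molloy--Reed machinery of Corollary~\ref{cor:molloyreed} (in its iterated/semi‑random form). Fix a hierarchy $1/D_0\ll\xi\ll\mu\ll\delta\ll\eta$, where $\mu$ is the constant returned by Corollary~\ref{cor:molloyreed} on input $\eps:=\delta$. Using the hypothesis, fix a bipartition $V(\mathcal H)=P\cup Q$ of the graph $G$ on $V(\mathcal H)$ with an edge $xy$ whenever $\mathrm{cod}_{\mathcal H}(x,y)\ge D^{1-\eta}$. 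Call a pair $(x,y)\in P\times Q$ \emph{heavy} if $\mathrm{cod}_{\mathcal H}(x,y)>D^{1-\delta}$; since $\delta<\eta$, every heavy pair is an edge of $G$, and since $\sum_{y:(x,y)\text{ heavy}}\mathrm{cod}_{\mathcal H}(x,y)\le 2\deg_{\mathcal H}(x)\le 2D$, every vertex lies in at most $2D^{\delta}$ heavy pairs. Every edge lying in no heavy pair already has all codegrees at most $D^{1-\delta}$, so these edges are harmless for Corollary~\ref{cor:molloyreed}; the entire difficulty is the \emph{heavy part}, i.e.\ the edges lying in at least one heavy pair.

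For each heavy pair $(x,y)$ partition the $\mathrm{cod}_{\mathcal H}(x,y)$ edges through both $x$ and $y$ into \emph{blocks} of some common size $b$ (with one shorter leftover block that we delete), where $b$ is chosen large enough that the total number of deleted leftover edges, crudely at most $2|\mathcal H|D^{\delta}\cdot b$, stays below $|\mathcal H|D^{1-\xi}$, and small enough that $b\le D^{1-\delta}$ and that the dependences appearing below are sufficiently weak. Record this structure in an auxiliary bipartite multigraph $K$ on $P\cup Q$, with one edge, labelled by the block, joining $x$ to $y$ for each block of each heavy pair. The point of the blocks is twofold: first, any \emph{matching} in $K$ selects at most one block per heavy pair and hence, on the sub‑hypergraph of $\mathcal H$ formed by its selected edges, reduces every heavy codegree to at most $b\le D^{1-\delta}$; and second --- crucially, exactly as in Section~\ref{sec:simpler} --- such a matching can be produced at random by the bounded‑dependence matching algorithm (break the relevant unions of matchings of $K$ into vertex‑disjoint short paths/cycles, two‑colour and independently randomly orient each, union down, and repeat) in such a way that, for each vertex $v$, the number of edges through $v$ that get selected has Lipschitz constant only about $b$ in the independent random bits. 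This lets us track, via McDiarmid's inequality (Lemma~\ref{lem:mcdiarmid}), the degree of each vertex in the selected sub‑hypergraph and show it is concentrated; a union bound (valid because $\mathcal H$ has at least $\sqrt D$ vertices) then lets us fix an outcome in which the selected heavy sub‑hypergraph, together with the harmless non‑heavy part, forms a hypergraph $\mathcal H'\subseteq\mathcal H$ that misses at most $|\mathcal H|D^{1-\xi}$ edges of $\mathcal H$, has all codegrees at most $D^{1-\delta}$, and is approximately regular of degree (close to) $D$ on the appropriate pieces.

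It then remains to colour $\mathcal H'$ with at most $(1+D^{-\xi})D$ colours. A single application of Corollary~\ref{cor:molloyreed} to $\mathcal H'$ already gives $\chi'(\mathcal H')\le(1+D^{-\mu})\Delta(\mathcal H')\le(1+D^{-\xi})D$ provided we have arranged $\Delta(\mathcal H')\le D$; more care is needed if, as is likely, the random selection is done in stages (e.g.\ one stage per K\"onig colour class of $K$), in which case one instead runs the semi‑random colouring process across all stages simultaneously, spending a palette of size $(1+D^{-\mu})d_i$ on the $i$‑th stage out of a total budget $\sum_i d_i\le(1+D^{-\Omega(1)})D$, where $d_i$ is that stage's (approximate) regular degree. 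Either way one concludes $\chi'(\mathcal H')\le(1+D^{-\xi})D$.

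I expect the main obstacle to be this last step: ensuring the colour budgets add up to $(1+D^{-\xi})D$ rather than to $\sum_i\Delta(\mathcal H_i)$, which they would if the stages were irregular and coloured independently. This is exactly why the bounded‑dependence matching algorithm (rather than a deterministic K\"onig decomposition) is needed --- it is what delivers the approximate regularity of each stage --- and why the parameter $b$ (block size $=$ roughly the maximal path length) must be tuned so that all the relevant random variables are Lipschitz enough for the concentration, and the union bounds, to go through across all vertices and all stages simultaneously. Running the Molloy--Reed/nibble colouring on top of this randomised structure, keeping colour‑degrees and available‑colour lists concentrated throughout, is where the bulk of the technical work lies.
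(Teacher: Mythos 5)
Your first stage (heavy pairs, blocks, the auxiliary multigraph $K$, a random matching produced by the bounded-dependence algorithm, concentration via Lemma~\ref{lem:mcdiarmid}) matches the paper's Lemma~\ref{lem:semiunifieddecomp} in spirit, but there is a genuine gap in your accounting of how many edges survive, and it propagates into the colouring step. A matching in $K$ selects at most one block per vertex, and each block has size $b\le D^{1-\delta}$; so at a vertex $v$ most of whose degree passes through heavy pairs, the selected sub-hypergraph retains only about $b$ of $v$'s $\approx D$ incident edges. The selection therefore keeps only roughly a $1/t$ fraction of the heavy part (in the paper this is exactly property \ref{prop:noedgethatlikely}: $\P(e\in E(\mathcal H'))\le 1/t$ and $e(\mathcal H')\approx e(\mathcal H)/t$), not all but $|\mathcal H|D^{1-\xi}$ edges as you claim; nor is the selected sub-hypergraph approximately $D$-regular --- its maximum degree is about $D/t$. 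This is unavoidable: the ``all or small'' codegree structure that Corollary~\ref{cor:molloyreed} needs forces the degree at a heavy vertex to collapse to one block. So a single selection plus a single application of Corollary~\ref{cor:molloyreed} cannot prove the theorem.

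The paper closes this gap with a mechanism you gesture at but do not supply. It generates $r=t^2$ \emph{independent} copies $\mathcal H'_1,\dots,\mathcal H'_r$ of the random selection, each with $\Delta(\mathcal H'_i)\le(1+D^{-\eps})D/t$, the all-or-small codegree property, and $e(\mathcal H'_i)\gtrsim e(\mathcal H)/t$. By Chernoff, almost every edge lies in close to $m\approx r/t$ of the copies; each such edge is then assigned uniformly at random to one copy containing it, producing disjoint pieces $\mathcal H''_i$ with $\Delta(\mathcal H''_i)\le(1+D^{-\eps/2})D/tm$ that still satisfy the codegree condition. Applying Corollary~\ref{cor:molloyreed} to each piece costs $(1+D^{-\xi})D/tm$ colours, and the total over $r$ pieces is $r\cdot(1+D^{-\xi})D/tm\le(1+D^{-\xi})D$ precisely because every surviving edge lies in essentially the same number $m$ of copies. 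Your alternative --- one stage per K\"onig colour class of $K$ and ``running the semi-random colouring across all stages simultaneously'' --- does not resolve the budget problem you yourself identify: the K\"onig classes need not be even approximately regular, and no argument is given for why the palettes across stages can be shared. The independence of the $r$ random copies, together with the uniform edge-multiplicity $m$, is the missing ingredient.
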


We will then deduce Theorem~\ref{thm:lowerbounddecomp} from this.
We will use McDiarmid's inequality, in the following form (see \cite[Lemma~1.2]{M89}).
\begin{lemma}\label{lem:mcdiarmid} Let $n\in \N$ and $c_1,\ldots,c_n\geq 0$. For each $i\in [n]$, let $X_i$ be an independent random variable taking values in $\Omega_i$, and let $X=(X_1,\ldots,X_n)$.
Let $f:\prod_{i=1}^n\Omega_i\to \mathbb{R}$ be a function such that, for each $i\in [n]$, changing $X$ in the $i$th co-ordinate changes the value of $f(X)$ by at most $c_i$.

Then, for all $t>0$,
\[
\P(|f(X)-\mathbb{E}(f(X))|>t)\leq 2\exp\left(-\frac{t^2}{\sum_{i\in [n]}c_i^2}\right).
\]
\end{lemma}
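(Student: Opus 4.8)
The statement to prove is McDiarmid's (bounded differences) inequality, and the plan is to prove it --- in fact in the slightly stronger form with $2\exp(-2t^2/\sum_i c_i^2)$ in place of the displayed bound, which trivially implies the displayed bound --- via the standard exposure (Doob) martingale together with Hoeffding's lemma. First I would introduce the filtration $\mathcal{F}_i = \sigma(X_1,\ldots,X_i)$ for $0\le i\le n$ and the martingale $Z_i = \E[f(X)\mid \mathcal{F}_i]$, noting that $Z_0 = \E f(X)$ is constant and $Z_n = f(X)$. Because the coordinates are independent, $Z_i = g_i(X_1,\ldots,X_i)$ where $g_i(x_1,\ldots,x_i) = \E[f(x_1,\ldots,x_i,X_{i+1},\ldots,X_n)]$, and the martingale differences $D_i = Z_i - Z_{i-1}$ satisfy $\E[D_i\mid\mathcal{F}_{i-1}]=0$, with $f(X) - \E f(X) = \sum_{i=1}^n D_i$.

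The key step is to control the conditional range of $D_i$. Fixing $X_1,\ldots,X_{i-1}$, one has, as a function of $X_i$ alone, $D_i = g_i(X_1,\ldots,X_{i-1},X_i) - \E_{X_i'}\bigl[g_i(X_1,\ldots,X_{i-1},X_i')\bigr]$. Since changing only the $i$-th coordinate of $f$ changes its value by at most $c_i$, integrating out $X_{i+1},\ldots,X_n$ shows the map $x\mapsto g_i(X_1,\ldots,X_{i-1},x)$ has oscillation at most $c_i$; hence, conditionally on $\mathcal{F}_{i-1}$, the variable $D_i$ takes values in an interval of length at most $c_i$ and has conditional mean $0$. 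Hoeffding's lemma then gives $\E[e^{\lambda D_i}\mid\mathcal{F}_{i-1}] \le \exp(\lambda^2 c_i^2/8)$ for every $\lambda\in\mathbb{R}$.

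Writing $S = \sum_{i=1}^n c_i^2$, iterated conditioning --- peel off the last factor $\E[e^{\lambda D_n}\mid\mathcal{F}_{n-1}]\le e^{\lambda^2 c_n^2/8}$ using the tower property, then induct downward --- yields $\E[e^{\lambda(Z_n-Z_0)}] \le \exp(\lambda^2 S/8)$. For $t>0$, Markov's inequality applied to $e^{\lambda(f(X)-\E f(X))}$ gives $\P(f(X)-\E f(X)\ge t) \le \exp(-\lambda t + \lambda^2 S/8)$, and optimising at $\lambda = 4t/S$ gives $\exp(-2t^2/S)$. Applying the same argument to $-f$ (which has the same bounded-differences constants $c_i$) and taking a union bound gives $\P(|f(X)-\E f(X)|>t) \le 2\exp(-2t^2/S) \le 2\exp(-t^2/S)$, which is the claimed bound.

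The only delicate point, and hence the main obstacle, is the conditional range estimate: one must verify carefully that the bounded-differences property really descends from $f$ to the partially-averaged functions $g_i$ (this is where independence of the $X_i$ is used), and that what feeds into Hoeffding's lemma is the \emph{range} of $D_i$ conditional on $\mathcal{F}_{i-1}$ rather than a pointwise bound on $|D_i|$ --- a pointwise bound of $c_i$ on $D_i$ is false in general and would in any case cost a constant factor. The martingale bookkeeping, Hoeffding's lemma, and the Chernoff optimisation are otherwise entirely routine.
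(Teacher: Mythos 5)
Your proof is correct: the exposure-martingale argument with the conditional-range observation fed into Hoeffding's lemma, followed by the Chernoff optimisation and a two-sided union bound, is the standard derivation of the bounded-differences inequality, and it even yields the stronger constant $2\exp(-2t^2/\sum_i c_i^2)$, which implies the stated bound. The paper itself gives no proof of this lemma --- it simply cites it from McDiarmid's survey \cite{M89} --- and the proof given there is essentially the same martingale argument you describe.
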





\subsection{A random subhypergraph with `all or small' codegrees}
To prove Theorem~\ref{thm:semiunifieddecomp}, we will first use our methods to prove the following lemma.

\begin{lemma}\label{lem:semiunifieddecomp} There are some $\eps,\mu,\eta>0$ and $D_0$ such that the following holds for each $D\geq D_0$. Let $\mathcal{H}$ be a $3$-uniform hypergraph with maximum degree $D$ and at least $\sqrt{D}$ vertices. Suppose that the graph on $V(\mathcal{H})$ with edges $xy$ present if $\mathrm{cod}_{\mathcal{H}}(x,y)\geq D^{1-\eta}$ is bipartite.

Then, there is some $D^{3\mu}\leq t\leq 8D^{3\mu}$ and a random subhypergraph $\mathcal{H}'\subset \mathcal{H}$ such that
\stepcounter{propcounter}
\begin{enumerate}[label = {\emph{\textbf{\Alph{propcounter}\arabic{enumi}}}}]
\item for each $e\in E(\mathcal{H})$, $\P(e\in E(\mathcal{H}'))\leq 1/t$,\labelinthm{prop:noedgethatlikely}
\item $\Delta(\mathcal{H}')\leq (1+D^{-\eps})D/t$,\labelinthm{prop:maxdegree}
\item for each distinct $x,y\in V(\mathcal{H})$, either $\mathrm{cod}_{\mathcal{H}'}(x,y)\leq D^{1-\eps}/t$ or every $e\in E(\mathcal{H}')$ satisfies $\{x,y\}\subset V(e)$ or $\{x,y\}\cap V(e)=\emptyset$,\labelinthm{prop:allorlittlecodegree}
\end{enumerate}
and, with probability at least $1-D^{-10}$, the following holds.
\begin{enumerate}[label = {\emph{\textbf{\Alph{propcounter}\arabic{enumi}}}}]\addtocounter{enumi}{3}
\item $e(\mathcal{H'})\geq e(\mathcal{H})/t-D^{1-\eps}|\mathcal{H}|/t$.\labelinthm{prop:manyedges}
\end{enumerate}
\end{lemma}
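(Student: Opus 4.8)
The plan is to realise the bounded-dependence random matching algorithm sketched in Section~\ref{sec:simpler}, but now run it simultaneously on the whole hypergraph $\mathcal{H}$ rather than on one auxiliary bipartite multigraph. Write $P$ for the set of ``heavy'' pairs $\{x,y\}$ with $\mathrm{cod}_{\mathcal{H}}(x,y)\geq D^{1-\eta}$; by hypothesis the graph $(V(\mathcal{H}),P)$ is bipartite, and since $\Delta(\mathcal{H})\leq D$ each vertex lies in at most $D^{\eta}$ pairs of $P$, so $P$ has small maximum degree. First I would handle the edges of $\mathcal{H}$ that contain a heavy pair: for each such pair $f\in P$, the edges $e\supset f$ are determined by their third vertex, so they naturally organise into ``bundles'' which should be kept or discarded together — this is exactly the role played by the codegree-$1$ condition along one side in the sketch. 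After contracting each such bundle to a single ``super-edge'' (equivalently, looking at the link of $f$), we are left with a structure of bounded maximum degree in which we must select roughly a $1/t$ fraction of the super-edges, nearly regularly and with small codegrees, for a suitable $t\in[D^{3\mu},8D^{3\mu}]$.

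The core step is to choose $t$ (a power of $2$ times something, say $t=2^\ell$ with $D^{3\mu}\le t\le 8D^{3\mu}$) and iterate the halving construction $\ell$ times. Concretely, using Corollary~\ref{cor:molloyreed} (applied to the hypergraph obtained after contracting the heavy bundles, which has ``all-or-small'' codegrees) properly edge-colour $\mathcal{H}$ with $(1+D^{-\mu})D$ colours, thereby decomposing $E(\mathcal{H})$ into that many matchings; group them into $2^{\ell}$ groups each a union of about $(1+D^{-\mu})D/t$ matchings. At stage $s$ ($s=1,\dots,\ell$) I would pair up the current matchings, take each union (a disjoint union of paths and even cycles), cut it into vertex-disjoint paths/cycles of length at most $k:=D^{\mu}$ (discarding at most $2e(\mathcal{H})/k$ edges in total over all stages), properly $2$-colour each piece red/blue, and for each piece flip an independent fair coin to decide whether to keep its red or its blue edges. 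This yields $\mathcal{H}'$, the union of the survivors after $\ell$ stages. Property \ref{prop:noedgethatlikely} is then immediate: an edge survives only if the right coin came up at each of the $\ell$ stages it participates in, giving probability at most $2^{-\ell}=1/t$ (with the discarded edges only lowering this). For \ref{prop:allorlittlecodegree}: a pair $\{x,y\}$ that is not a heavy pair has $\mathrm{cod}_{\mathcal{H}}(x,y)< D^{1-\eta}\le D^{1-\eps}$ already (choosing $\eps<\eta$), so a fortiori $\mathrm{cod}_{\mathcal{H}'}(x,y)\le D^{1-\eps}/t$ since codegrees only drop; a heavy pair is exactly one of those we bundled, so every surviving edge contains both or neither of $x,y$ by construction.

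The concentration claims \ref{prop:maxdegree} and \ref{prop:manyedges} are the technical heart, and follow the computation in the sketch. Fix a vertex $v$ and let $d_s(v)$ be the number of edges at $v$ surviving after $s$ stages; one shows $\mathbb{E}[d_{s}(v)\mid \text{stage }{<}s]$ is essentially $d_{s-1}(v)/2$ up to an additive $O(\text{edges discarded at }v)$ error, and — crucially — changing one coin at stage $s$ changes $d_s(v)$ by at most $k=D^{\mu}$, so McDiarmid (Lemma~\ref{lem:mcdiarmid}) with $\sum c_i^2\le e(\mathcal{H})k^2\le Dn\cdot D^{2\mu}$ gives a deviation of $O(\sqrt{Dn}\,D^{\mu}\sqrt{\log D})$ at each stage with failure probability $D^{-\omega(1)}$; summing the (geometrically shrinking) errors over the $\ell=O(\log D)$ stages and taking $\mu$ small enough relative to $\eps$ shows $d_\ell(v)=D/t\pm D^{1-\eps}/t$ and hence \ref{prop:maxdegree}, and similarly controls $e(\mathcal{H}')$ for \ref{prop:manyedges}. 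Since this must hold for all $n\le e(\mathcal{H})\le Dn$ vertices and, for \ref{prop:maxdegree}, $\Delta$ is a worst case over vertices while \ref{prop:maxdegree} is required to hold always (not just whp), I would take the union bound: each bad event has probability $D^{-\omega(1)}$, there are at most $Dn\le D\cdot|\mathcal{H}|$ of them (using $|\mathcal{H}|\ge \sqrt D$ to absorb polynomial factors into $D^{-10}$), so all hold simultaneously with probability $\ge 1-D^{-10}$; if the max-degree event fails one may simply pass to a deterministic trivial $\mathcal{H}'$ (e.g. delete edges) to keep \ref{prop:maxdegree} unconditional, at no cost to the other properties which are then vacuous or easy. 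The main obstacle is bookkeeping the error terms through the $\ell=\Theta(\log D)$ stages so that the accumulated additive slack stays below $D^{1-\eps}/t$: this forces the hierarchy $0<1/D_0\ll\mu\ll\eps<\eta$, with $k=D^{\mu}$ and $t=\Theta(D^{3\mu})$ chosen exactly so that $2n/k$ per stage, times $\ell$ stages, divided by $t$, is still $o(D^{1-\eps}/t)$.
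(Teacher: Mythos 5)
Your plan has a circularity at its core. The first step --- ``using Corollary~\ref{cor:molloyreed} \ldots\ properly edge-colour $\mathcal{H}$ with $(1+D^{-\mu})D$ colours'' --- is not available. Corollary~\ref{cor:molloyreed} requires that every pair either has codegree at most $D^{1-\eps}$ or satisfies the all-or-nothing condition, and the hypothesis of Lemma~\ref{lem:semiunifieddecomp} gives neither: a heavy pair $\{x,y\}$ can have codegree up to $D$ while $x$ also lies in many edges avoiding $y$, and a single vertex can lie in up to $D^{\eta}$ heavy pairs, so the vertex-splitting trick in the proof of Corollary~\ref{cor:molloyreed} (which needs the heavy pairs to form a matching) does not apply. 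Your proposed fix of ``contracting each heavy bundle to a super-edge'' does not help: the edges inside a bundle pairwise intersect in the heavy pair, so a colouring of the contracted object assigning one colour per bundle cannot be lifted to a proper colouring of $\mathcal{H}$. Producing a near-$D$ proper edge-colouring of (almost all of) $\mathcal{H}$ is essentially the conclusion of Theorem~\ref{thm:semiunifieddecomp}, which this lemma is a step towards; you cannot assume it as input. Relatedly, even if you had matchings of $\mathcal{H}$, the union of two matchings of a $3$-uniform hypergraph is not a disjoint union of paths and even cycles (its intersection graph has maximum degree $3$), so the red/blue alternation step is not defined. The paper escapes both problems by running the halving algorithm not on $\mathcal{H}$ but on an auxiliary bipartite \emph{multigraph} $K$ of maximum degree $k=D^{\mu}$, whose edges are disjoint blocks of $m\approx D/k$ hyperedges (one family $\mathcal{B}$ for heavy pairs, one family $\mathcal{B}_v$ per vertex); $K$ genuinely decomposes into $k$ matchings, and a random matching of $K$ tells each vertex which single block its surviving edges may come from.

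Two further points would still fail even with that repair. First, keeping or discarding a heavy bundle ``together'' is incompatible with \ref{prop:maxdegree}: a selected bundle can contain up to $D$ edges through one vertex, far exceeding $(1+D^{-\eps})D/t$. The paper instead thins each selected block, both by independent subsampling (the sets $A^h_B$) and by deterministic trimming (conditions enforcing $|B\cap E(\mathcal{H}^h_2)|\leq (1+hD^{-\eps}/\ell)m/2^{2h}$), which is also why \ref{prop:maxdegree} and \ref{prop:allorlittlecodegree} hold with probability $1$ rather than merely with high probability; note also that the survival probability per round is $1/8$ (three independent bits per edge), giving $t=k^3$, not $1/2$ per round with $t=2^{\ell}$. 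Second, your ``a fortiori'' argument for light pairs in \ref{prop:allorlittlecodegree} proves only $\mathrm{cod}_{\mathcal{H}'}(x,y)\leq D^{1-\eps}$, whereas the target is $D^{1-\eps}/t$; monotonicity of codegrees under passing to a subhypergraph does not supply the extra factor of $1/t$, and one needs either concentration of each codegree under the sparsification or, as in the paper, an explicit cap on $|(B\cap B')\cap E(\mathcal{H}^h_2)|$ maintained at every round. The concentration framework you describe (McDiarmid with Lipschitz constant bounded by the path/cycle length, errors summed geometrically over $\ell$ rounds) is the right one and matches the paper's Claims in spirit, but it must be applied to these block-level quantities, not to vertex degrees of a union of matchings.
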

\begin{proof}
Let $0 < 1/D_0\ll \eps\ll \gamma\ll \mu\ll \eta \ll 1$ and $D\geq D_0$. (Probably, we can simply take $\eta = 2\mu$.) Let $\mu$ be such that $k=D^{\mu}$ is a power of $2$, let $\ell\in \mathbb{N}$ then be such that $k=2^\ell$, and let $t=k^3$.
Let $V=V(\mathcal{H})$ and $m=\lfloor D/k\rfloor$. As set-up, do the following.

\stepcounter{propcounter}
\begin{enumerate}[label = {{\textbf{\Alph{propcounter}\arabic{enumi}}}}]
\item Remove from $\mathcal{H}$ any edge $xyz$ in which $\{x,y\}$ and $\{y,z\}$ both have codegree at least $m$, and let the result be $\mathcal{H}_0$.\label{prop:removedoublecodegreeedges}
\item Take a maximal collection $\mathcal{B}$ of disjoint subsets of $E(\mathcal{H}_0)$ with size $m$ such that, for every $B\in \mathcal{B}$, there is some set $U_B\subset V$ with $|U_B|=2$ and $U_B\subset V(e)$ for each $e\in B$.\label{prop:initialcollectionB}
\item Let $\mathcal{B}_v$, $v\in V$, maximise $\sum_{v\in V}|\mathcal{B}_v|$ subject to the following conditions.\label{prop:initialcolletionsBv}
\begin{enumerate}[label = {{\textbf{\Alph{propcounter}\arabic{enumi}.\arabic{enumii}}}}]
\item For each $v\in V$, the sets in $\mathcal{B}_v$ are disjoint subsets of
$\{e\in E(\mathcal{H}_0):v\in V(e)\}\setminus \left(\cup_{B\in \mathcal{B}:v\in U_B}B\right)$ 
with size $m$.\label{prop:origdegreebound}
\item For each distinct $v,w\in V$ and $B\in \mathcal{B}_v$, the number of edges in $B$ containing $w$ is at most $D^{2\gamma}m/k$.\label{prop:origcodegreebound}
\item For each distinct $v,w\in V$ and $B\in \mathcal{B}_v$, $B'\in \mathcal{B}_w$, $|B\cap B'|\leq D^\gamma m/k^2$.\label{prop:origcodegreepairbound}
\end{enumerate}
\end{enumerate}



Let $\mathcal{H}_1$ be the subhypergraph of $\mathcal{H}_0$ which contains exactly the edges $e=uvw\in E(\mathcal{H}_0)$ for which
either there is some $B\in \mathcal{B}$ with $e\in B$ and some $v\in V$ and $B'\in \mathcal{B}_v$ with $e\in B'$ or, for each $v\in V(e)$, there is some $B\in \mathcal{B}_v$ with $e\in B$. For each $B\in \mathcal{B}$, let $A_B^0=B$.

Let $K$ be an auxiliary multigraph with vertex set $V\cup (\cup_{v\in V}\mathcal{B}_v)$ where, for each $B\in \mathcal{B}$, we add $U_B$ to $E(K)$ and, for each $v\in V$ and $B\in \mathcal{B}_v$, we add $vB$ to $E(K)$. Note that, by the conditions on $\mathcal{H}$, we have that $K$ is a bipartite multigraph with maximum degree at most $k$, and therefore (for example, by embedding $K$ into a $k$-regular bipartite multigraph and repeatedly using Hall's matching criterion to find a perfect matching) $\chi'(K)\leq k$. Thus, we can partition $K$ into matchings $M^0_1,\ldots,M^0_k$. Let $s=D^{\gamma/2}$.


 For each $1\leq h\leq \ell$ in turn do the following with the matchings $M^{h-1}_1,\dots,M^{h-1}_{k/2^{h-1}}$ to generate matchings $M^h_1,\ldots,M^h_{k/2^h}$ randomly:
\begin{itemize}
\item Let $F_h\subset M^{h-1}_1\cup \dots\cup M^{h-1}_{k/2^{h-1}}$ be a minimal set of edges such that, for each $i\in [k/2^h]$, if, for the appropriate $r_{i}^h$, $M^{h-1}_{2i}\cup M^{h-1}_{2i-1}-F_h$ (considered as a multigraph and removing any copies of edges in $F_h$) has as its components the paths/cycles $S^h_{i,1},\ldots,S^h_{i,r_i^h}$, then, each such path/cycle has length at most $s$.
\item For each $i\in [k/2^h]$, and each $j\in [r_i^h]$, pick $x_{i,j}^h$ uniformly and independently at random from $\{0,1\}$ and let
\[
M^{h}_i=\left(\cup_{j\in [r_i^h]:x_{i,j}^h=1}E(S^h_{i,j})\cap M^{h-1}_{2i}\right)\cup \left(\cup_{j\in [r_i^h]:x_{i,j}^h=0}E(S^h_{i,j})\cap M^{h-1}_{2i-1}\right).
\]
\item For each $B\in \mathcal{B}$, let $A_B^h\subset A_B^{h-1}$ be chosen by including each element of $A_B^{h-1}$ independently at random with probability $1/2$.
\end{itemize}

Let $M=M^\ell_1$ and $\mathcal{H}^0_1=\mathcal{H}^0_2=\mathcal{H}_1$. For each $h\in [\ell]$, let $M^h=\cup_{i\in [k/2^h]}M^h_i$. For each $h\in [\ell]$, let $\mathcal{H}^h_0$
be a subhypergraph of $\mathcal{H}^{h-1}_2$ which maximises $e(\mathcal{H}^h_0)$ subject to the following conditions.
\stepcounter{propcounter}
\begin{enumerate}[label = {{\textbf{\Alph{propcounter}\arabic{enumi}}}}]

\item For each $B\in \mathcal{B}$ and each $v\in V$ and $B'\in \mathcal{B}_v$, if $(B\cap B')\cap E(\mathcal{H}^{h}_0)\neq \emptyset$, then $U_B,vB'\notin F^h$ and $U_B$ and $vB'$ do not appear in any path/cycle in $S^h_{i,1},\ldots,S^h_{i,r_i^h}$ together, for any $i\in [k/2^h]$.\label{prop:notfromsamecycle2}
\item For each distinct $v,w\in V$ and $B\in \mathcal{B}_v$, $B'\in \mathcal{B}_w$, if $(B\cap B')\cap E(\mathcal{H}^{h}_0)\neq \emptyset$, then $vB,wB'\notin F^h$ and $vB$ and $wB'$ do not appear in any path/cycle in $S^h_{i,1},\ldots,S^h_{i,r_i^h}$ together, for any $i\in [k/2^h]$.\label{prop:notfromsamecycle1}
\end{enumerate}

Then, for each $h\in [\ell]$, let $\mathcal{H}^h_1$ be the hypergraph with vertex set $V$ and edges $e\in E(\mathcal{H}^h_0)$ for which either there is some $B\in \mathcal{B}$ with $e\in B$ and some $v\in V$ and $B'\in \mathcal{B}_v$ with $e\in B'$, where $U_B,vB'\in M^h$ and $e\in A^h_B$, or, for each $v\in V(e)$, there is some $B\in \mathcal{B}_v$ with $e\in B$ and $vB\in M^h$.

Then, for each $h\in [\ell]$, let $\mathcal{H}^h_2$ be a subhypergraph of $\mathcal{H}^h_1$ which maximises $e(\mathcal{H}^h_2)$ subject to the following conditions.
\stepcounter{propcounter}
\begin{enumerate}[label = {{\textbf{\Alph{propcounter}\arabic{enumi}}}}]
\item For each $B\in \mathcal{B}\cup (\cup_{v\in V}\mathcal{B}_v)$, $|B\cap E(\mathcal{H}^h_2)|\leq (1+h\cdot D^{-\eps}/\ell) m/2^{2h}$.\label{prop:degreedecreasing}
\item For each distinct $v,w\in V$ and $B\in \mathcal{B}_v$, the number of edges in $B\cap E(\mathcal{H}^h_2)$ containing $w$ is at most $(1+h/\ell) D^{2\gamma} m/2^{2h}k$.\label{prop:codegreedecreasing}
\item For each distinct $v,w\in V$ and $B\in \mathcal{B}_v$, $B'\in \mathcal{B}_w$, $|(B\cap B')\cap E(\mathcal{H}^h_2)|\leq (1+h/\ell) D^\gamma m/2^{h}k^2$.\label{prop:codegreepairsdecreasing}
\end{enumerate}

Finally, let $\mathcal{H}'=\mathcal{H}^\ell_2$.
We will show that $\mathcal{H}'$ satisfies \ref{prop:noedgethatlikely}--\ref{prop:allorlittlecodegree}, and, with probability at least $1-D^{-10}$, \ref{prop:manyedges} holds.

\medskip


\noindent\ref{prop:noedgethatlikely}: \textbf{Case 1.} Let $e\in E(\mathcal{H})$ be such that $e\in B_e$ for some $B_e\in \mathcal{B}$ and there is some $v_e\in V$ and $B'_e\in \mathcal{B}_{v_e}$ with $e\in B'_e$.
For each $h\in [\ell]$, we will show that $\P(e\in E(\mathcal{H}_1^h)|e\in E(\mathcal{H}_0^h))= 1/8$. Suppose then that $h\in [\ell]$ and $e\in E(\mathcal{H}_0^h)$. Note that, as $e\in E(\mathcal{H}_0^h)$, from \ref{prop:notfromsamecycle2} we have that $U_{B_e}$ and $v_{e}B'_{e}$ are not in the same path/cycle $S_{i,j}^h$ for any $i\in [k/2^h]$ and $j\in [r_i^h]$.
Furthermore, $e\in E(\mathcal{H}_1^h)$ only if $U_{B_e}\in M^{h}$, $v_eB_e'\in M^h$ and $e\in A^h_{B_e}$, which are three independent events which each occur with probability $1/2$. Thus, $\P(e\in E(\mathcal{H}_1^h)|e\in E(\mathcal{H}_0^h))=1/8$.
Therefore, $\P(e\in E(\mathcal{H}'))\leq \P(e\in E(\mathcal{H}_1^\ell))\leq 1/8^{\ell}=1/k^3=1/t$.

\smallskip

\noindent \textbf{Case 2.} Let $e\in E(\mathcal{H})$ be such that for each $v\in V(e)$ there is some $B_{e,v}\in \mathcal{B}_v$ with $e\in B_{e,v}$.
For each $h\in [\ell]$, we will show that $\P(e\in E(\mathcal{H}_1^h)|e\in E(\mathcal{H}_0^h))= 1/8$. Suppose then that $h\in [\ell]$ and $e\in E(\mathcal{H}_0^h)$. Note that, as $e\in E(\mathcal{H}_0^h)$, from \ref{prop:notfromsamecycle1} we have that no two of $vB_{e,v}$ are in the same path/cycle $S_{i,j}^h$ for any $i\in [k/2^h]$ and $j\in [r_i^h]$.
Furthermore, $e\in E(\mathcal{H}_1^h)$ only if $vB_{e,v}\in M^{h}$ for each $v\in V(e)$, which are three independent events which each occur with probability $1/2$. Thus, $\P(e\in E(\mathcal{H}_1^h)|e\in E(\mathcal{H}_0^h))=1/8$.
Therefore, $\P(e\in E(\mathcal{H}'))\leq \P(e\in E(\mathcal{H}_1^\ell))\leq 1/8^{\ell}=1/k^3=1/t$.


\medskip

\noindent\ref{prop:maxdegree}: Let $v\in V$. Note that if there is no edge in $M$ that contains $v$ then there are no edges in $\mathcal{H}'$ containg $v$, and thus $d_{\mathcal{H}'}(v)=0$.
As there is at most one edge in $M$ which contains $v$, we can assume that there is exactly one such edge. Suppose that this edge corresponds to $U_B$ for some $B\in \mathcal{B}$. Then, the only edges $e\in E(\mathcal{H}')$ with $v\in V(e)$ are those in $B$, and therefore,  by \ref{prop:degreedecreasing},
\[
d_{\mathcal{H}'}(v)\leq |B\cap E(\mathcal{H}_2^\ell)|\leq (1+D^{-\eps})m/2^{2\ell}=(1+D^{-\eps})D/k^3=(1+D^{-\eps})D/t.
\]
Suppose then that the edge in $M$ containing $v$ is $vB$ for some $B\in \mathcal{B}_v$. Then, similarly by \ref{prop:degreedecreasing}, $d_{\mathcal{H}'}(v)\leq |B\cap E(\mathcal{H}_2^\ell)|\leq (1+D^{-\eps})D/t$.
Thus, \ref{prop:maxdegree} holds.


\medskip

\noindent\ref{prop:allorlittlecodegree}: Let $x,y\in V(\mathcal{H})$ be distinct. Suppose there is some $B\in \mathcal{B}$ with $U_B=\{x,y\}$ and $U_B\in M$. Let $e\in E(\mathcal{H}')$ contain $x$. Then, $M$ must contain an edge which contains $x$ which is either $U_{B'}$ for some $B'\in \mathcal{B}$ or 
$xB'$ for some $B'\in \mathcal{B}_x$, for which $e\in B'$. As $M$ is a matching, the only possibility is $B'=B$, and then $\{x,y\}\subset V(e)$. Thus, there are no edges in $\mathcal{H}'$ that contain $x$ but not $y$.
 Arguing similarly, there are also no edges in $\mathcal{H}'$ that contain $y$ but not $x$. Thus, every edge in $\mathcal{H}''$ contains either both $x$ and $y$ or neither of them.

Now, suppose there is no $B\in \mathcal{B}$ with $U_B=\{x,y\}$ and $U_B\in M$. If there is some $B\in \mathcal{B}$ with $x\in U_B$ and $U_B\in M$, then, if $z$ is such that $U_B=\{x,z\}$, then the only possible edge in $\mathcal{H}'$ containing $x$ and $y$ is $xyz$, and thus $\mathrm{cod}_{\mathcal{H}'}(x,y)\leq 1$.
Similarly, if there is some $B\in \mathcal{B}$ with $y\in U_B$ and $U_B\in M$, then $\mathrm{cod}_{\mathcal{H}'}(x,y)\leq 1$.
Therefore, if $\mathrm{cod}_{\mathcal{H}'}(x,y)> 1$, there must be some $B\in \mathcal{B}_x$ and $B'\in \mathcal{B}_y$ with $xB,yB'\in M$, so that, from \ref{prop:codegreepairsdecreasing},
$\mathrm{cod}_{\mathcal{H}'}(x,y)\leq |(B\cap B')\cap E(\mathcal{H}^\ell_2)|\leq 2D^\gamma m/2^{\ell}k^2= 2D^{\gamma-\mu}D/t\leq  D^{1-\eps}/t$, where we have used that $\eps\ll \gamma\ll \mu$. This completes the proof of \ref{prop:allorlittlecodegree}.

\medskip

\noindent\ref{prop:manyedges}: It is left then only to prove that \ref{prop:manyedges} holds with probability at least $1-D^{-10}$.
We start by showing the following claim.



\begin{claim} \label{clm:codegreepairsdecreasing}
For each $h\in [\ell]$ and each distinct $v,w\in V$ and $B\in \mathcal{B}_v$, $B'\in \mathcal{B}_w$, with probability at least $1-D^{-20}$,
\begin{equation}\label{eqn:smallcodegreepairs}
|(B\cap B')\cap E(\mathcal{H}^h_1)|\leq (1+h/\ell) D^\gamma m/2^{h}k^2.
\end{equation}
\end{claim}
\begin{proof}[Proof of Claim~\ref{clm:codegreepairsdecreasing}] Let $h\in [\ell]$. Let $v,w\in V$ be distinct and let $B\in \mathcal{B}_v$ and $B'\in \mathcal{B}_w$. Now, by the choice of $\mathcal{H}_2^{h-1}$ (in particular \ref{prop:origcodegreepairbound} or \ref{prop:codegreepairsdecreasing}), we always have that
\begin{equation}\label{eqn:codegreepair}
|(B\cap B')\cap E(\mathcal{H}^{h}_0)|\leq |(B\cap B')\cap E(\mathcal{H}^{h-1}_2)|\leq (1+(h-1)/2\ell)D^\gamma m/2^{h-1}k^2.
\end{equation}
Let $Z_{v,w}$ be the set of $z$ such that $vwz\in (B\cap B')\cap E(\mathcal{H}^{h}_0)$. For each $z\in Z_{v,w}$, let $B_z\in \mathcal{B}_z$ be such that $vwz\in B_z$, noting that, therefore, $zB_z\in M^{h-1}$. For each $z\in Z_{v,w}$,
note that, if $vwz\in E(\mathcal{H}^h_1)$, then we must have that the indicator variable for the cycles/paths containing $zB_z$ must be equal to 1 or 0 depending on which matching $zB_z$ is in in $M^{h-1}=M_1^{h-1}\cup\ldots\cup M^{h-1}_{k/2^{h-1}}$. Therefore, using \eqref{eqn:codegreepair},
\[
\E|(B\cap B')\cap E(\mathcal{H}^h_1)|\leq (1+(h-1)/2\ell) D^\gamma m/2^{h}k^2.
\]
For each $i\in [k/2^h]$ and $j\in [r_{i}^h]$, let $c_{i,j}^h$ be the number of edges in $S_{i,j}^h$ which are $zB_z$ for some $z\in Z_{v,w}$, and note that changing the variable $x_{i,j}^h$ changes $|(B\cap B')\cap E(\mathcal{H}^h_1)|$ by at most $c_{i,j}^h$, and that $c_{i,j}^h\leq s$.
Thus,
\[
\sum_{i\in [k/2^h]}\sum_{j\in [r_i^h]}(c_{i,j}^h)^2\leq s\cdot \sum_{i\in [k/2^h]}\sum_{j\in [r_i^h]}c_{i,j}^h\leq s|Z_{v,w}|\overset{\eqref{eqn:codegreepair}}{\leq} 2sD^\gamma m/2^{h-1}k^2.
\]
Therefore, by Lemma~\ref{lem:mcdiarmid}, \eqref{eqn:smallcodegreepairs} does not hold with probability at most
\[
2\exp\left(-\frac{(\frac{1}{\ell}D^\gamma m/2^{h}k^2)^2}{2sD^\gamma m/2^{h-1}k^2}\right)=2\exp\left(-\frac{D^\gamma m/2^{h}k^2}{4s\ell^2}\right)\leq 2\exp\left(-\frac{D^{1+\gamma}}{4s\ell^2k^4}\right)\leq D^{-20},
\]
as required.
\renewcommand{\qedsymbol}{$\boxdot$}
\end{proof}
\renewcommand{\qedsymbol}{$\square$}
Now we show a similar claim to Claim~\ref{clm:codegreepairsdecreasing} for codegrees.


\begin{claim} \label{clm:codegreedecreasing}
For each $h\in [\ell]$ and each distinct $v,w\in V$ and $B\in \mathcal{B}_v$, with probability at least $1-D^{-20}$,
the number of edges in $B\cap E(\mathcal{H}^h_1)$ containing $w$ is at most $(1+h/\ell) D^{2\gamma} m/2^{2h}k$.
\end{claim}
\begin{proof}[Proof of Claim~\ref{clm:codegreedecreasing}] Let $h\in [\ell]$. Let $v,w\in V$ be distinct and let $B\in \mathcal{B}_v$. Now, by the choice of $\mathcal{H}_2^{h-1}$ (in particular \ref{prop:origcodegreebound} or \ref{prop:codegreedecreasing}), we always have that the number of edges in $B\cap E(\mathcal{H}^h_0)$ containing $w$ is at most $(1+(h-1)/\ell) D^{2\gamma}m/2^{2(h-1)}k$.

Let $Z_{v,w}$ be the set of $z\in V\setminus \{v,w\}$ such that $vwz\in B\cap E(\mathcal{H}^{h}_0)$ and there is some $B_z\in \mathcal{B}$ such that $vwz\in B_z$ and $z\in U_{B_z}$. Note that, as $vwz\in B$, then, by \ref{prop:origdegreebound}, $U_{B_z}=\{z,w\}$.
Let  $Z_{v,w}'$ be the set of $z\in V\setminus \{v,w\}$ such that $vwz\in B\cap E(\mathcal{H}^{h}_0)$ and there is some $B_z\in \mathcal{B}_z$ with $vwz\in B_z$ and some $B_{z}^w\in \mathcal{B}_w$ with $vwz\in B_z^w$.
Note that $|Z_{v,w}|+|Z'_{v,w}|\leq (1+(h-1)/\ell) D^{2\gamma}m/2^{2(h-1)}k$.

Now, for each $z\in Z_{v,w}$, we have $U_{B_z}\in M^{h-1}$ and $U_{B_z}\notin F^h$ by the choice of $\mathcal{H}_0^h$ (from \ref{prop:notfromsamecycle2}). Note that if $vwz\in E(\mathcal{H}^h_1)$ then we have $vwz\in A^h_{B_z}$ and the indicator variable $x_{i,j}^h$ for the path/cycle containing $U_{B_z}$ must be equal to 0 or 1 depending on which matching $U_{B_z}$ is in in the partition $M^{h-1}=M_1^{h-1}\cup\ldots\cup M^{h-1}_{k/2^{h-1}}$. Thus, $vwz\in E(\mathcal{H}^h_1)$ with probability $1/4$.

Furthermore, for each $z\in Z'_{v,w}$, from \ref{prop:notfromsamecycle1} we have $zB_z,wB_z^w\in M^{h-1}\setminus F^h$, and that $zB_z$ and $wB_z^w$ do not appear in the same path/cycle in $S_{i,j}^h$, $i\in [k/2^h]$ and $j\in [r_i^h]$.
Note that if $vwz\in E(\mathcal{H}^h_1)$ then the indicator variables $x_{i,j}^h$ for the path/cycles containing $zB_z$ and $wB_z^w$ must be equal to 0 or 1 depending on which matching $zB_z$ and $wB_z^w$ is in respectively in the partition $M^{h-1}=M_1^{h-1}\cup\ldots\cup M^{h-1}_{k/2^{h-1}}$. Thus, $vwz\in E(\mathcal{H}^h_1)$ with probability $1/4$.

Therefore, altogether, we have
\[
\E|\{e\in B\cap E(\mathcal{H}_1^h):w\in V(e)\}|= \frac{1}{4}(|Z_{v,w}|+|Z_{v,w}'|)\leq (1+(h-1)/2\ell) D^{2\gamma} m/2^{2h}k.
\]

Now, for each $i\in [k/2^h]$ and $j\in [r_i^h]$, let $c^h_{i,j}$ be the number of $z\in Z_{v,w}$ with $U_{B_z}$ in the path/cycle $S^h_{i,j}$ or $z\in Z_{v,w}'$ with $zB_z$ or $wB_z^w$ in the path/cycle $S^h_{i,j}$. Then, changing the variable $x_{i,j}^h$ changes $|\{e\in B\cap E(\mathcal{H}_1^h):w\in V(e)\}|$ by at most $c_{i,j}^h$. Note that we have $c^h_{i,j}\leq s\cdot 2D^{\gamma} m/2^{h-1}k^2$ by the choice of $\mathcal{H}_2^{h-1}$ (in particular \ref{prop:origcodegreepairbound} or \ref{prop:codegreepairsdecreasing}).
Thus,
\begin{align*}
\sum_{i\in [k/2^h]}\sum_{j\in [r_{i}^h]}(c_{i,j}^h)^2&\leq (s\cdot 2D^{\gamma} m/2^{h-1}k^2)\cdot \sum_{i\in [k/2^h]}\sum_{j\in [r_{i}^h]}c_{i,j}^h\\
&\leq (s\cdot 2D^{\gamma} m/2^{h-1}k^2)\cdot (|Z_{v,w}|+2|Z_{v,w}'|)\\
&\leq (s\cdot 2D^{\gamma} m/2^{h-1}k^2)\cdot 4D^{2\gamma}m/2^{2(h-1)}k=8sD^{3\gamma}m^2/2^{3(h-1)}k^3.
\end{align*}
Furthermore, for each $z\in Z_{v,w}$, changing whether or not $vwz\in A_{B_z}^h$ changes $|\{e\in B\cap E(\mathcal{H}_1^h):w\in V(e)\}|$ by at most 1, and $|Z_{v,w}|+\sum_{i\in [k/2^h]}\sum_{j\in [r_{i}^h]}(c_{i,j}^h)^2\leq 16sD^{3\gamma}m^2/2^{3(h-1)}k^3$.
Thus, by Lemma~\ref{lem:mcdiarmid}, the number of edges in $B\cap E(\mathcal{H}^h_1)$ containing $w$ is more than $(1+h/\ell) D^{2\gamma} m/2^{2h}k$ with probability at most
\[
2\exp\left(-\frac{(\frac{1}{\ell}D^{2\gamma} m/2^{2h}k)^2}{16sD^{3\gamma}m^2/2^{3(h-1)}k^3}\right)
=2\exp\left(-\frac{D^{\gamma}k}{128s\ell^22^h}\right)\leq 2\exp\left(-\frac{D^{\gamma/2}}{128\ell^2}\right)\leq D^{-20},
\]
where we have used that $s=D^{\gamma/2}$ and $2^h\leq 2^\ell=k$.
\renewcommand{\qedsymbol}{$\boxdot$}
\end{proof}
\renewcommand{\qedsymbol}{$\square$}

Now we show a similar claim to Claim~\ref{clm:codegreedecreasing} for degrees.


\begin{claim} \label{clm:degreedecreasing}
For each $B\in \mathcal{B}\cup (\cup_{v\in V}\mathcal{B}_v)$, with probability at least $1-D^{-20}$,
\[
|B\cap E(\mathcal{H}^h_1)|\leq (1+D^{-\eps}h/\ell) m/2^{2h}.
\]
\end{claim}
\begin{proof}[Proof of Claim~\ref{clm:degreedecreasing}]  \textbf{Case 1.} Let $B\in \mathcal{B}$.  Now, by the choice of $\mathcal{H}_2^{h-1}$ (in particular \ref{prop:origdegreebound} or \ref{prop:degreedecreasing}), we always have that $|B\cap E(\mathcal{H}^{h}_0)|)\leq (1+D^{-\eps}(h-1)/\ell) m/2^{2(h-1)}$.
Let $Z_B$ be the set of $z$ such that $\{z\}\cup U_B\in B\cap E(\mathcal{H}^{h}_0)$, so that $|Z_B|\leq (1+D^{-\eps}(h-1)/\ell) m/2^{2(h-1)}$.
For each $z\in Z_B$, let $B_z\in \mathcal{B}_z$ be such that $\{z\}\cup U_B\in B_z$, noting that, therefore, $zB_z\in M^{h-1}\setminus  F_h$.
Note that if $\{z\}\cup U_B\in B\cap E(\mathcal{H}^h_1)$, then we must have that the indicator variable for the cycle/path containing $zB_z$ must be equal to 1 or 0 depending on which matching $zB_z$ is in in the partition $M^{h-1}=M_1^{h-1}\cup\ldots\cup M^{h-1}_{k/2^{h-1}}$, and we must have $\{z\}\cup U_B\in A^h_B$.
Therefore,
\[
\E|B\cap E(\mathcal{H}^{h}_1)|= \frac{1}{4}|Z_B|\leq (1+D^{-\eps}(h-1)/2\ell) m/2^{2h}.
\]

Now, changing whether or not a single edge is in $A^h_B$ or not affects $|B\cap E(\mathcal{H}^{h}_1)|$ by at most 1, and at most $|Z_B|\leq 2m/2^{2(h-1)}=8m/2^{2h}$ of these events can affect $|B\cap E(\mathcal{H}^{h}_1)|$.
For each $i\in [k/2^h]$ and $j\in [r_i^h]$, let $c^h_{i,j}$ be the number of $z\in Z_B$ with $zB_z$ in the path/cycle $S^h_{i,j}$. Then, changing the variable $x_{i,j}^h$ changes $|B\cap E(\mathcal{H}^{h}_1)|$ by at most $c_{i,j}^h\leq s$, while $\sum_{i\in [k/2^h]}\sum_{j\in [r_{i}^h]}c^h_{i,j}\leq |Z_B|$.
Thus
\begin{align*}
\sum_{i\in [k/2^h]}\sum_{j\in [r_{i}^h]}(c_{i,j}^h)^2&\leq s\cdot \sum_{i\in [k/2^h]}\sum_{j\in [r_{i}^h]}c_{i,j}^h
 \leq s\cdot |Z_B|\leq s\cdot 2m/2^{2(h-1)}=8sm/2^{2h}.
\end{align*}

Thus, by Lemma~\ref{lem:mcdiarmid}, $|B\cap E(\mathcal{H}^{h-1}_1)|>(1+D^{-\eps}h/\ell) m/2^{2h}$ with probability at most
\begin{align*}
2\exp\left(-\frac{(\frac{1}{\ell}D^{-\eps}m/2^{2h})^2}{8sm/2^{2h}+8m/2^{2h}}\right)&\leq 2\exp\left(-\frac{(\frac{1}{\ell}D^{-\eps}m/2^{2h})^2}{16sm/2^{2h}}\right)
=2\exp\left(-\frac{D^{-{2}\eps}m}{2^{2h}\cdot \ell^2\cdot 16s}\right)\\
&\leq 2\exp\left(-\frac{D^{1-{
2}\eps}}{16k^3\ell^2s}\right)\leq D^{-20},
\end{align*}
as required.

\medskip

\noindent  \textbf{Case 2.} Let then $B\in \mathcal{B}_v$ with $v\in V$.  Now, by the choice of $\mathcal{H}_2^{h-1}$ (in particular \ref{prop:origdegreebound} or \ref{prop:degreedecreasing}), we always have that $|B\cap E(\mathcal{H}^{h}_0)|\leq (1+D^{-\eps}(h-1)/\ell) m/2^{2(h-1)}$.
Let $E_B=(B\cap E(\mathcal{H}^{h}_0))\cap(\cup_{B'\in \mathcal{B}}B')$, and let $E'_B=(B\cap E(\mathcal{H}^{h}_0))\setminus E_B$.
Note that $|E_B|+|E_B'|\leq (1+D^{-\eps}(h-1)/\ell) m/2^{2(h-1)}$.
For each $e\in E_B$, let $B_e\in \mathcal{B}$ be such that $e\in B_e$, and note that $U_{B_e}\in M^{h-1}$.
For each $e\in E'_B$, let $w_e,z_e$ be such that $e=vw_ez_e$, and let $B^w_e\in \mathcal{B}_{w_e}$ and $B^z_e\in \mathcal{B}_{z_e}$ satisfy $e\in B^w_{e}$ and $e\in B^z_{e}$, and note that $w_eB^w_{e}$ and $z_eB^z_e$ are in $M^{h-1}$ but in different cycles/paths $S^h_{i,j}$ across $i\in [k/2^h]$ and $j\in [r_i^h]$.

For each $e\in E_B$, note that if $e\in B\cap E(\mathcal{H}^h_1)$ then we must have that the indicator variable for the cycle/path containing $B_e$ must be equal to 1 or 0 depending on which matching $U_{B_e}$ is in in the partition $M^{h-1}=M_1^{h-1}\cup\ldots\cup M^{h-1}_{k/2^{h-1}}$ and that $e\in A^h_{B_e}$, which happens with probability $1/4$.
For each $e\in E_B'$ note that if $e\in B\cap E(\mathcal{H}^h_1)$ then we must have that the indicator variables for the cycles/paths containing $w_eB_{w_e}$ and $z_eB_{z_e}$ must be 0 or 1 depending on which matching $w_eB_{w_e}$ and $z_eB_{z_e}$ are in in the partition $M^{h-1}=M_1^{h-1}\cup\ldots\cup M^{h-1}_{k/2^{h-1}}$, which again happens with probability $1/4$.
Therefore,
\[
\E|B\cap E(\mathcal{H}^{h}_1)|= \frac{1}{4}(|E_B|+|E_B'|)\leq (1+D^{-\eps}(h-1)/2\ell) m/2^{2h}.
\]

Now, changing whether or not a single edge $e$ is in $A^h_{B_e}$ or not affects $|B\cap E(\mathcal{H}^{h}_1)|$ by at most 1 and at most $|E_B|\leq 2m/2^{2(h-1)}$ such events affect $|B\cap E(\mathcal{H}^h_1)|$.
For each $i\in [k/2^h]$ and $j\in [r_i^h]$, let $c^h_{i,j}$ be the number of $e\in E_B$ with $U_{B_e}$ in the path/cycle $S^h_{i,j}$ or $e\in E_B'$ with $w_eB_{w_e}$ or $z_eB_{z_e}$ in the path/cycle $S^h_{i,j}$.
Now, for each $wB'\in E(S^h_{i,j})$ with $w\in V\setminus \{v\}$ and $B'\in \mathcal{B}_w$, there are, by \ref{prop:codegreedecreasing}, at most $2D^{2\gamma}m/2^{2(h-1)}k$ edges $e\in B\cap E(\mathcal{H}^h_0)$ with $w_e=w$ and $B_{w_e}=B'$.
Furthermore, for each distinct $e,e'\in E_B\cap E(S)$ we have $B_e\neq B_{e'}$ as $e=\{v\}\cup U_{B_e}$ and $e'=\{v\}\cup U_{B_{e'}}$.
Therefore, as $e(S^h_{i,j})\leq s$, we have $c_{i,j}^h\leq 8sD^{2\gamma}m/2^{2h}k$.
Changing the variable $x_{i,j}^h$ changes $|B\cap E(\mathcal{H}^{h}_1)|$ by at most $c_{i,j}^h$, while
\begin{align*}
\sum_{i\in [k/2^h]}\sum_{j\in [r_{i}^h]}(c_{i,j}^h)^2&\leq (8sD^{2\gamma}m/2^{2h}k)\cdot \sum_{i\in [k/2^h]}\sum_{j\in [r_{i}^h]}c_{i,j}^h\leq (8sD^{2\gamma}m/2^{2h}k)\cdot (|E_B|+2|E_{B'}|)\\
&\leq (8sD^{2\gamma}m/2^{2h}k)\cdot 4m/2^{2(h-1)}
=2^7sD^{2\gamma}m^2/2^{4h}k,
\end{align*}
so $|E_B|+\sum_{i\in [k/2^h]}\sum_{j\in [r_{i}^h]}(c_{i,j}^h)^2\leq 2m/2^{2(h-1)}+2^7sD^{2\gamma}m^2/2^{4h}k\leq 2^8sD^{2\gamma}m^2/2^{4h}k$.
Thus, by Lemma~\ref{lem:mcdiarmid}, $|B\cap E(\mathcal{H}^{h}_1)|<(1+D^{-\eps}h/2\ell) m/2^{2h}$ with probability at most
\[
2\exp\left(-\frac{(\frac{1}{\ell}D^{-\eps}m/2^{2h})^2}{2^8sD^{2\gamma}m^2/2^{4h}k}\right)
=2\exp\left(-\frac{D^{-2\eps-2\gamma}k}{2^8s\ell^2}\right)\leq D^{-20},
\]
where we have used that $k=D^{\mu}$ and $\eps\ll \gamma\ll \mu$.
\renewcommand{\qedsymbol}{$\boxdot$}
\end{proof}
\renewcommand{\qedsymbol}{$\square$}



Putting Claims~\ref{clm:codegreepairsdecreasing} to~\ref{clm:degreedecreasing} together allows us to show that there is likely to be only an edge loss from $\mathcal{H}^h_1$ to $\mathcal{H}^h_2$, as follows.

\begin{claim} \label{clm:minimaledgeloss}
For each $h\in [\ell]$, with probability at least $1-D^{-11}$,
\begin{equation}\label{eqn:minimaledgeloss}
e(\mathcal{H}^h_2)\geq e(\mathcal{H}^h_1)-|V|.
\end{equation}
\end{claim}
\begin{proof}[Proof of Claim~\ref{clm:minimaledgeloss}] Let $h\in [\ell]$. Let $X^h$ be the set of vertices $v\in V$ for which at least one of the following does not hold.
\begin{itemize}
\item There is some $B\in \mathcal{B}_v\cup\{B'\in \mathcal{B}:v\in U_{B'}\}$ such that $|B\cap E(\mathcal{H}^h_2)|\leq (1+h\cdot D^{-\eps}/\ell) m/2^{2h}$.
\item There is some $w\in V\setminus\{v\}$ and $B\in \mathcal{B}_v$, such that the number of edges in $B\cap E(\mathcal{H}^h_2)$ containing $w$ is at most $(1+h/\ell) D^{2\gamma} m/2^{2h}k$.
\item There is some $w\in V\setminus\{v\}$, $B\in \mathcal{B}_v$, and $B'\in \mathcal{B}_w$ with $|(B\cap B')\cap E(\mathcal{H}^h_2)|\leq (1+h/\ell) D^\gamma m/2^{h}k^2$.
\end{itemize}

Note that, as $\Delta(\mathcal{H})\leq D$, using Claims~\ref{clm:codegreepairsdecreasing} to~\ref{clm:degreedecreasing}, we have that, for each $v\in V$, $\P(v\in X^h)\leq 3D^2\cdot D^{-20}$.
Let $E^h$ be the set of edges of $\mathcal{H}$ containing some vertex in $X^h$, so that $\E|E^h|\leq |V|\cdot D\cdot 3D^2\cdot D^{-20}$. Then, by Markov's inequality, we have $|E^h|\geq |V|$ with probability at most $D^{-11}$. Finally, note that if $\mathcal{H}_1^h-E^h$ replaces $\mathcal{H}_2^h$ then \ref{prop:degreedecreasing}--\ref{prop:codegreepairsdecreasing} are satisfied. Thus, by the maximality of $\mathcal{H}_2^h$, $e(\mathcal{H}_2^h)\geq e(\mathcal{H}_1^h)-|E^h|$, and therefore \eqref{eqn:minimaledgeloss} holds with probability at least $1-D^{-11}$.
\renewcommand{\qedsymbol}{$\boxdot$}
\end{proof}
\renewcommand{\qedsymbol}{$\square$}


We now show that there will always be only a small edge loss from $\mathcal{H}^{h-1}_2$ to $\mathcal{H}^h_0$.

\begin{claim} \label{clm:minimaledgeloss2}
For each $h\in [\ell]$,
\[
e(\mathcal{H}^h_0)\geq e(\mathcal{H}^{h-1}_2)-|V|D^{1-\gamma/3}/2^{3h}
\]
\end{claim}
\begin{proof}[Proof of Claim~\ref{clm:minimaledgeloss2}]
Let $h\in [\ell]$. Note that by the minimality in the choice of $F^h$ we have
\[
|F^h|\leq \frac{2|V|}{s}\cdot \frac{k}{2^h}.
\]
Furthermore, for each $B\in \mathcal{B}$ with $U_B\in F^h$, $|B\cap E(\mathcal{H}^{h-1}_2)|\leq 2m/2^{2(h-1)}$ by the choice of $\mathcal{H}^{h-1}_{2}$ (in particular \ref{prop:initialcollectionB} or \ref{prop:degreedecreasing}).
Similarly, for each $v\in V$ and $B\in \mathcal{B}_v$ with $vB\in F^h$, we have that $|B\cap E(\mathcal{H}^{h-1}_2)|\leq 2m/2^{2(h-1)}$ by the choice of $\mathcal{H}^{h-1}_{2}$.
Therefore, the number of edges $e\in E(\mathcal{H}^{h-1}_2)$ for which either $e\in \cup_{B\in \mathcal{B}:U_B\in F^h}B$ or $e\in B$ for some $v\in V$, $B\in \mathcal{B}_v$ and $vB\in F^h$ is at most
\begin{equation}\label{eqn:nottoomany}
|F^h|\cdot 2m/2^{2(h-1)}\leq \frac{2|V|}{s}\cdot \frac{k}{2^h}\cdot \frac{8m}{2^{2h}}\leq \frac{16|V|D}{s2^{3h}}.
\end{equation}

Let $i\in [k/2^h]$ and $j\in [r_i^h]$. For each $B\in \mathcal{B}$, $v\in V$ and $B'\in \mathcal{B}_v$ with $U_B,vB'\in S_{i,j}^h$, we have $B\cap B'=\emptyset$ if $v\in U_B$ and otherwise the only possible edge in $B\cap B'$ is $\{v\}\cup U_B$, so in either case $|B\cap B'|\leq 1$. For each distinct $v,w\in V$ and $B\in \mathcal{B}_v,B'\in \mathcal{B}_w$, we have $|(B\cap B')\cap E(\mathcal{H}^{h-1}_2)|\leq 2D^\gamma m/2^{h-1}k^2$ by the choice of $\mathcal{H}_2^{h-1}$ (in particular \ref{prop:origcodegreepairbound} or \ref{prop:codegreepairsdecreasing}).

Therefore, using the maximality of $\mathcal{H}_h^0$, we have
\begin{align*}
e(\mathcal{H}^{h-1}_2)-e(\mathcal{H}^h_0)&\leq \frac{16|V|D}{s2^{3h}}+\sum_{i\in [k/2^h]}\sum_{j\in [r_i^h]}\frac{e(S_{i,j}^h)^2\cdot 2D^\gamma m}{2^{h-1}k^2}
&&\leq \frac{16|V|D}{s2^{3h}}+\sum_{i\in [k/2^h]}\sum_{j\in [r_i^h]}s\cdot \frac{2e(S_{i,j}^h)\cdot 2D^{1+\gamma}}{2^hk^3}\\
&\leq \frac{16|V|D}{s2^{3h}}+\sum_{i\in [k/2^h]}\frac{8s\cdot |V|\cdot D^{1+\gamma}}{2^hk^3}
&&= \frac{16|V|D}{s2^{3h}}+\frac{8s\cdot |V|\cdot D^{1+\gamma}}{2^{2h}\cdot k^2}\\
&=\frac{|V|D}{2^{3h}}\cdot \left( \frac{16}{s}+\frac{8sD^{\gamma}2^h}{k^2}\right)
&&\leq \frac{|V|D}{2^{3h}}\cdot \left( \frac{16}{s}+\frac{8sD^{\gamma}}{k}\right)
\leq D^{-\gamma/3}\cdot \frac{|V|D}{2^{3h}},
\end{align*}
as required, where we have used that $k=D^\mu$, $s=D^{\gamma/2}$, and $\gamma\ll \mu$.
\renewcommand{\qedsymbol}{$\boxdot$}
\end{proof}
\renewcommand{\qedsymbol}{$\square$}


Now we show that it is likely that the edge loss from $\mathcal{H}^{h}_0$ to $\mathcal{H}^h_1$ is only at most slightly larger than the expected factor of $7/8$.

\begin{claim}\label{clm:edgesdontdecreasetoofast}
For each $h\in [\ell]$, with probability at least $1-D^{-11}$,
\begin{equation}\label{eqn:edgesdontdecreasetoofast}
e(\mathcal{H}_1^h)\geq \frac{1}{8}e(\mathcal{H}_0^h)-D^{1-2\eps}|V|/2^{3h}.
\end{equation}
\end{claim}
\begin{proof}[Proof of Claim~\ref{clm:edgesdontdecreasetoofast}]
Let $E$ be the set of edges $e\in E(\mathcal{H}_0^h)$ for which $e\in \cup_{B\in \mathcal{B}}B$, and let $E'=E(\mathcal{H}_0^h)\setminus E$.
For each $e\in E$, let $B_e\in \mathcal{B}$, $z_e\in V\setminus U_{B_e}$ and $B'_e\in \mathcal{B}_{z_e}$ be such that $e\in B_e\cap B'_e$. Note that $e\in E(\mathcal{H}_1^h)$ exactly if $U_B,z_eB'\in M^h$ and $e\in A^h_{B_e}$, which, by the choice of $\mathcal{H}_0^h$, happens with probability $1/8$.
For each $e\in E'$, label vertices $v_e,w_e,z_e$ and take $B_e^v\in \mathcal{B}_{v_e},B_e^w\in \mathcal{B}_{w_e}$ and $B_e^z\in \mathcal{B}_{z_e}$ such that $e=\{v_e,w_e,z_e\}$ and $e\in B_e^v\cap B_e^w\cap B_e^z$. Note that $e\in E(\mathcal{H}_1^h)$ exactly if $v_eB_e^v,w_eB_e^w,z_eB_e^z\in M^h$, which, by the choice of $\mathcal{H}_0^h$, happens with probability $1/8$.
Therefore, $\E(e(\mathcal{H}_1^h))\geq \frac{1}{8}e(\mathcal{H}_0^h)$.

Now, changing whether or not a single edge is in $A^h_B$ or not affects $e(\mathcal{H}_1^h)$ by at most 1, and the number of these variables is 
(using the choice of $\mathcal{H}_0^h$, and in particular \ref{prop:origdegreebound} or \ref{prop:degreedecreasing}) at most
\[
e(\mathcal{H}_0^h)\leq |V|\cdot (k/2^{h-1})\cdot 2m/2^{2(h-1)}= 16|V|D/2^{3h}.
\]
For each $i\in [k/2^h]$ and $j\in [r_i^h]$, changing the variable $x_{i,j}^h$ changes $e(\mathcal{H}_1^h)$ by  (again using the choice of $\mathcal{H}_0^h$,
and in particular \ref{prop:origdegreebound} or \ref{prop:degreedecreasing})
 at most $c_{i,j}^h:=e(S_{i,j}^h)\cdot 2m/2^{2(h-1)}$.
Note that
\begin{align*}
\sum_{i\in [k/2^h]}\sum_{j\in [r_i^h]}(c_{i,j}^h)^2& \leq \sum_{i\in [k/2^h]}\sum_{j\in [r_i^h]}e(S^h_{i,j})\cdot s\left(\frac{2m}{2^{2(h-1)}}\right)^2\leq \frac{k}{2^h}\cdot 2|V|\cdot s\left(\frac{2m}{2^{2(h-1)}}\right)^2
=\frac{2^7D^2s|V|}{2^{5h}k},
\end{align*}
so that
\[
e(\mathcal{H}_0^h)+\sum_{i\in [k/2^h]}\sum_{j\in [r_i^h]}(c_{i,j}^h)^2\leq\frac{16|V|D}{2^{3h}}+\frac{2^7D^2s|V|}{2^{5h}k}\leq \frac{2^8D^2s|V|}{2^{5h}k}.
\]
Thus, by Lemma~\ref{lem:mcdiarmid}, \eqref{eqn:edgesdontdecreasetoofast} does not hold with probability at most
\begin{align*}
2\exp\left(-\frac{(D^{1-2\eps}|V|/2^{3h})^2}{2^8D^2s|V|/2^{5h}k}\right)
&=2\exp\left(-\frac{D^{-4\eps}k|V|}{2^82^hs}\right)\leq
2\exp\left(-\frac{D^{-4\eps-\gamma/2}|V|}{2^8}\right)\leq D^{-11},
\end{align*}
where we have used that $|V|\geq D^{1/2}$.
\renewcommand{\qedsymbol}{$\boxdot$}
\end{proof}
\renewcommand{\qedsymbol}{$\square$}


Claims~\ref{clm:minimaledgeloss}--\ref{clm:edgesdontdecreasetoofast} allows us to bound the likely edge loss from $\mathcal{H}_1$ to $\mathcal{H}'$. Indeed, using Claims~\ref{clm:minimaledgeloss}--\ref{clm:edgesdontdecreasetoofast}, we have, with probability at least $1-3\ell D^{-11}\geq 1-D^{-10}$, that, for each $h\in [\ell]$, $e(\mathcal{H}^h_0)\geq e(\mathcal{H}^{h-1}_2)-D^{1-\gamma/3}\cdot |V|/2^{3h}$,
$e(\mathcal{H}_1^h)\geq \frac{1}{8}e(\mathcal{H}_0^h)-D^{1-2\eps}|V|/2^{3h}$ and
$e(\mathcal{H}^h_2)\geq e(\mathcal{H}^h_1)-|V|$. When this holds, we have, for each $h\in [\ell]$, that
\[
e(\mathcal{H}^h_2)\geq \frac{1}{8}e(\mathcal{H}^{h-1}_2)- 3D^{1-2\eps}|V|/2^{3h},
\]
and therefore, it is easy to show by induction that, for each $h\in [\ell]$, $e(\mathcal{H}^h_2)\geq 2^{-3h}\cdot e(\mathcal{H}^{h-1}_0)-h\cdot 3D^{1-2\eps}|V|/2^{3h}$. Thus, we have
\[
e(\mathcal{H}')=e(\mathcal{H}^\ell_2)\geq 2^{-3\ell}\cdot e(\mathcal{H}^{0}_2)-\ell\cdot 3D^{1-2\eps}|V|/2^{3\ell}\geq \frac{1}{t}e(\mathcal{H}^{0}_2)-\frac{1}{2t}D^{1-\eps}|V|.
\]
To complete the proof of \ref{prop:manyedges}, it is sufficient then to show that $e(\mathcal{H}_1)=e(\mathcal{H}^{0}_2)\geq e(\mathcal{H})-D^{1-\eps}|V|/2$.


For this, first note that, from \ref{prop:removedoublecodegreeedges}, for each $v\in V$, there are at most $D/m\leq 2D^\mu$ vertices $x\in V$ with $\mathrm{cod}_{\mathcal{H}}(v,x)\geq m$, and, thus, $e(\mathcal{H})-e(\mathcal{H}_0)\leq |V|\cdot (2D^\mu)^2$.
Take an arbitrary $\mathcal{B}$ satisfying \ref{prop:initialcollectionB}. For each $v\in V$, let
\[
k_v=\lfloor|\{e\in E(\mathcal{H}_0)\setminus \cup_{B\in \mathcal{B}:v\in U_B}B:v\in V(e)\}|/m\rfloor
\]
and, if $k_v\geq k\cdot D^{-\gamma/4}$, then uniformly at random take a collection $\mathcal{B}_v$ of $k_v$ disjoint subsets of size $m$ in $\{e\in E(\mathcal{H}_0)\setminus \cup_{B\in \mathcal{B}:v\in U_B}B:v\in V(e)\}$, and, otherwise, take $\mathcal{B}_v=\emptyset$.

Now, letting $Z\subseteq E(\mathcal{H}_0)$ to be the set of edges $e\in E(\mathcal{H}_0)$ for which either there is some $B\in \mathcal{B}$ with $e\in B$ and some $v\in V$ and $B'\in \mathcal{B}_v$ with $e\in B'$ or, for each $v\in V(e)$, there is some $B\in \mathcal{B}_v$ with $e\in B$, we have
\[
|E(\mathcal{H}_0)\setminus Z|\leq
m\cdot (k\cdot D^{-\gamma/4}+1)\cdot |V|\leq D^{1-\eps}|V|/12.
\]
Furthermore, for each distinct $v,w\in V$ with $k_v\geq k\cdot D^{-\gamma/4}$, as there are at most $m$ edges in $E(\mathcal{H}_0)\setminus \cup_{B\in \mathcal{B}:v\in U_B}B$ containing both $v$ and $w$ by the maximality of $\mathcal{B}$, the probability that there is some $B\in \mathcal{B}_v$ for which there are at least $2m/k_v\leq D^{2\gamma}m/k$ edges in $B$ containing $w$ is, using an appropriate Chernoff bound for hypergeometric random variables, at most $k\cdot D^{-10}$.
Similarly, for each distinct $v,w\in V$ with $k_v,k_w\geq k\cdot D^{-\gamma/4}$, the probability that there is some $B\in \mathcal{B}_v$ and $B'\in \mathcal{B}_w$ for which $|B\cap B'|\leq 2m/k_vk_w\leq D^{\gamma}n/k^2$ is at most $k^2\cdot D^{-10}$.

Therefore, with positive probability,  if we delete, for each $v\in V$, each $B\in \mathcal{B}_v$
for which \ref{prop:origcodegreebound} or \ref{prop:origcodegreepairbound} does not hold, then this deletes certainly at most $D^{1-\eps}|V|/12$ edges.
Thus, by the maximality of the $\mathcal{B}_v$, $v\in V$, we have that 
$Z$ contains all but at most $D^{1-\eps}|V|/6$ edges of $e(\mathcal{H}_0)$.
Note that, by the definition of $\mathcal{H}_1$, we therefore have that $\mathcal{H}_1$ contains all but at most $D^{1-\eps}|V|/2$ of the edges of $e(\mathcal{H}_0)$. Thus, we have that $e(\mathcal{H}_1)\geq e(\mathcal{H})-(2D^\mu)^2|V|-D^{1-\eps}|V|/2\geq e(\mathcal{H})-D^{1-\eps}|V|$, as required.
\end{proof}


\subsection{Proof of Theorem~\ref{thm:semiunifieddecomp}}\label{sec:deduce}
We now use Lemma~\ref{lem:semiunifieddecomp} and Corollary~\ref{cor:molloyreed} to prove Theorem~\ref{thm:semiunifieddecomp}.
\begin{proof}[Proof of Theorem~\ref{thm:semiunifieddecomp}] Let $\eps,\mu,\eta>0$ and $D_0$ have the property from Lemma~\ref{lem:semiunifieddecomp}, as shown to exist by that lemma, let $\xi\ll \eps,1/k$, and, moreover, assume that $1/D_0\ll \eps,1/k$.
We will show that Theorem~\ref{thm:semiunifieddecomp} holds for $\xi,\eta$ and $D_0$. Let then $D\geq D_0$ and let $\mathcal{H}$ be a $3$-uniform hypergraph with maximum degree $D$ and at least $\sqrt{D}$ vertices for which the graph on $V(\mathcal{H})$
with edges $xy$ present if $\mathrm{cod}_{\mathcal{H}}(x,y)\geq D^{1-\eta}$ is bipartite.

Using the property from Lemma~\ref{lem:semiunifieddecomp}, there is some $D^{3\mu}\leq t\leq 8D^{3\mu}$ for which, setting $r=t^2$, we can find independent random subhypergraphs $\mathcal{H}'_1,\ldots,\mathcal{H}'_r$ of $\mathcal{H}$ such that, for each $i\in [r]$,
\begin{itemize}
\item for each $e\in E(\mathcal{H})$, $\P(e\in E(\mathcal{H}'_i))\leq 1/t$,
\item $\Delta(\mathcal{H}'_i)\leq (1+D^{-\eps})D/t$,
\item for each distinct $x,y\in V(\mathcal{H}_i')$, either $\mathrm{cod}_{\mathcal{H}'_i}(x,y)\leq D^{1-\eps}/t$ or every $e\in E(\mathcal{H}')$ satisfies $\{x,y\}\subset V(e)$ or $\{x,y\}\cap V(e)=\emptyset$, and,
\item with probability at least $1-D^{-10}$, $e(\mathcal{H}'_i)\geq e(\mathcal{H})/t-D^{1-\eps}|\mathcal{H}|/t$.
\end{itemize}
By a simple application of a Chernoff bound, we have, for each $e\in E(\mathcal{H})$, that
\[
\P\left(|\{i\in [r]:e\in E(\mathcal{H}_i')\}|>(1+D^{-\eps})\frac{r}{t}\right)<D^{-10}.
\]
Therefore, by an application of Markov's lemma, with probability $1-D^{-5}$, for all but at most $|\mathcal{H}|$ edges $e\in E(\mathcal{H})$ we have $|\{i\in [r]:e\in E(\mathcal{H}_i')\}|\leq (1+D^{-\eps})\frac{r}{t}$.
Let $\mathcal{H}_1$ be the subhypergraph of $\mathcal{H}$ of these edges.
With positive probability then, we can assume that $e(\mathcal{H}_1)\geq e(\mathcal{H})-|\mathcal{H}|$, and that $e(\mathcal{H}'_i)\geq e(\mathcal{H})/t-D^{1-\eps}|\mathcal{H}|/t$ for each $i\in [r]$.

Let $m=(1+D^{-\eps})r/t$.
For each $e\in E(\mathcal{H}_1)$, independently at random pick $i_e\in \{0\}\cup \{i\in [r]:e\in E(\mathcal{H}_i')\}$ so that $\P(i_e=i)=1/m$ for each $i\in [r]$ for which $e\in E(\mathcal{H}_i')$.
For each $i\in [r]$, let $\mathcal{H}''_i$ be the subhypergraph of $\mathcal{H}'_i$ of edges $e\in E(\mathcal{H}_i')$ with $i_e=i$.
Using a Chernoff bound and the local lemma, we can assume that, for each $i\in [r]$, the following hold
\begin{itemize}
\item $\Delta(\mathcal{H}''_i)\leq (1+D^{-\eps/2})D/tm$,
\item for each distinct $x,y\in V(\mathcal{H}_i'')$, either $\mathrm{cod}_{\mathcal{H}''_i}(x,y)\leq D^{1-\eps/2}/tm$ or every $e\in E(\mathcal{H}''_i)$ satisfies $\{x,y\}\subset V(e)$ or $\{x,y\}\cap V(e)=\emptyset$, and,
\item $e(\mathcal{H}''_i)\geq e(\mathcal{H})/tm-2D^{1-\eps}|\mathcal{H}|/tm$.
\end{itemize}
Now, as $\xi,1/D_0\ll \eps,1/k$, by Corollary~\ref{cor:molloyreed}, $\chi(\mathcal{H}''_i)\leq (1+D^{-\xi})D/tm$ for each $i\in [r]$.
Let $\mathcal{H}'=\cup_{i\in [r]}\mathcal{H}''_i$, so that
\[
\chi(\mathcal{H}')\leq r\cdot (1+D^{-\xi})D/tm\leq  (1+D^{-\xi})D,
\]
and
\[
e(\mathcal{H}')\geq r\cdot e(\mathcal{H})/tm-r\cdot 2D^{1-\eps}|\mathcal{H}|/tm
= (1+D^{-\eps})^{-1}\cdot e(\mathcal{H})-(1+D^{-\eps})^{-1}\cdot 2D^{1-\eps}|\mathcal{H}|
\geq e(\mathcal{H})-|\mathcal{H}|\cdot D^{1-\xi},
\]
and hence $\mathcal{H}'$ satisfies our requirements.
\end{proof}


\subsection{Proof of Theorem~\ref{thm:lowerbounddecomp}}
We now deduce Theorem~\ref{thm:lowerbounddecomp} from Theorem~\ref{thm:semiunifieddecomp}.
\begin{proof}[Proof of Theorem~\ref{thm:lowerbounddecomp}] Let $\xi,\eta>0$ and $D_0$ be such that the property in Theorem~\ref{thm:semiunifieddecomp} holds for each $D\geq D_0$ with $\xi$ and $\eta$, and let $0<\eps\leq \xi/3$.  Assume moreover that $\eps>0$ is small enough that
$n-n^{1-\eps} \leq 1$ for each $n<D_0$. We will show that Theorem~\ref{thm:lowerbounddecomp} holds for $\eps$. Let $S$ then be an equi-$n$-square. Note that if $n<D_0$, then all that is required is for $S$ to contain 1 transversal with at least 1 cell, which trivially holds. Assume then that $n\geq D_0$. By making $\eps$ even smaller if necessary, we may also assume $n^{\xi/3}> 2$.

Say the set of rows of $S$ is $\mathcal{I}$, the set of columns is $\mathcal{J}$ and the set of symbols is $\mathcal{A}$. Form a 3-uniform hypergraph $\mathcal{H}$ on $\mathcal{I}\cup \mathcal{J}\cup \mathcal{A}$ by adding, for each $i\in \mathcal{I}$ and $j\in \mathcal{J}$, an edge $ijc$ where $c\in \mathcal{A}$ is the symbol in the square of $S$ indexed by $(i,j)$.
If $\mathcal{H}$ contains at least $n-n^{1-\eps}$ disjoint matchings with at least $n-n^{1-\eps}$ edges, then it can be easily seen that $S$ has at least $n-n^{1-\eps}$ disjoint transversals with size at least $n-n^{1-\eps}$. Suppose then that $\mathcal{H}$ does not contain at least $n-n^{1-\eps}$ disjoint matchings with at least $n-n^{1-\eps}$ edges.

Note that if $i\in \mathcal{I}$ and $j\in \mathcal{J}$ then $\mathrm{cod}_{\mathcal{H}}(i,j)=1$. Therefore, the graph on $V(\mathcal{H})$ with edges $xy$ present if $\mathrm{cod}_{\mathcal{H}}(x,y)\geq n^{1-\eta}$ is bipartite. 
Thus, by Theorem~\ref{thm:semiunifieddecomp}, there is some subhypergraph $\mathcal{H}'\subset \mathcal{H}$ with $\chi'(\mathcal{H})\leq (1+n^{-\xi})n$ and
$e(\mathcal{H}')\geq e(\mathcal{H})-n^{2-\xi}$. 
However, we then have
\begin{align*}
e(\mathcal{H}')&\leq (n-n^{1-\eps})n+(\chi'(\mathcal{H}')-n+n^{1-\eps})(n-n^{1-\eps})\\
&\leq (n-n^{1-\eps})n+(n^{1-\xi}+n^{1-\eps})(n-n^{1-\eps})
\leq n^2+n^{2-\xi}-n^{2-2\eps},
\end{align*}
so that $e(\mathcal{H})\leq e(\mathcal{H}')+n^{2-\xi}\leq n^2+2n^{2-\xi}-n^{2-2\eps}<n^2$, as $\eps\le \xi/3$ and we assumed $n^{\xi/3}>2$, a contradiction as $e(\mathcal{H})=n^2$. Thus, $S$ has at least $n-n^{1-\eps}$ disjoint transversals with size at least $n-n^{1-\eps}$, as required.
\end{proof}


\bibliographystyle{abbrv}
\bibliography{equinsquares}

\end{document}